\documentclass{mpaper}

\include{thref}
\usepackage{mathrsfs}
\usepackage{xargs}
\usepackage{ifthen}

\newcommand{\st}{{}\,:\,{}}
\newcommand{\textdef}[1]{\textit{#1}}

\newcommand{\restrict}[1]{\ensuremath{\left. \hspace{-1mm} \right|_{#1}}}

\newcommand{\diam}[1]{\mathrm{diam}(#1)}

\newcommand{\family}[2]{\{#1\}_{#2}}
\newcommand{\sequence}[2]{(#1)_{#2}}

\newcommandx{\cont}[3][1={},2={}]{{ \mathcal{C}^{#1}_{#2}\ifthenelse{\equal{#3}{}}{}{(#3)} }}

\newcommand{\setcomplement}[2][]{\ifthenelse{\equal{#1}{}}{#2^\mathrm{C}}{ {#1}\setdifference {#2} }}

\newcommand{\ProbabilityMeas}{\mathbb{P}}
\newcommand{\dd}[1]{\mathop{d#1}}
\newcommand{\support}{\operatorname{spt}}

\newcommand{\intring}[1]{\mathcal{O}(#1)}

\newcommand{\setdifference}{\setminus}

\newcommand{\boldfaceDelta}{\boldsymbol{\Delta}}
\newcommand{\boldfaceSigma}{\boldsymbol{\Sigma}}
\newcommand{\boldfacePi}{\boldsymbol{\Pi}}
\newcommand{\boldfaceGamma}{\boldsymbol{\Gamma}}

\newcommand{\borel}{\mathbf{B}}

\newcommand{\Cantor}{{2^\mathbb{N}}}

\newcommand{\hypClosed}{\mathbf{F}}
\newcommand{\hypCompact}{\mathbf{K}}
\newcommand{\hmetric}{\operatorname{d}_\mathcal{H}}

\newcommand{\hypCompactUF}{\mathbf{K}_{U}}

\newcommand{\hypClosedUF}{\mathbf{F}_U}

\newcommand{\hypClosedV}{\mathbf{V}}

\newcommand{\wadgereducible}{\le_W}

\newcommand{\Salem}{\mathscr{S}}
\newcommand{\closedSalem}{\Salem_c}

\newcommand{\hmeas}{\mathcal{H}}

\newcommand{\hdim}{ \operatorname{dim}_\mathcal{H} }
\newcommand{\capdim}{{\operatorname{dim}_c}}
\newcommand{\fourierdim}{\operatorname{dim}_{\mathrm{F}}}
\newcommand{\fourierdimcomp}{{\operatorname{dim}_{\mathrm{FC}}}}

\newcommand{\fouriertransform}[1]{{ \widehat{#1} }}
\newcommand{\scalarprod}[2]{{ {#1}\cdot {#2} }}

\newcommand{\closure}[1]{\overline{#1}}
\newcommand{\ball}[2]{{B(#1,#2)}}

\newcommand{\interior}{\operatorname{Int}}

\author{Alberto Marcone \and Manlio Valenti}
\date{}

\newcommand{\printauthor}{{
  \bigskip
  \footnotesize

  Alberto Marcone, \textsc{Department of Mathematics, Computer Science and Physics\newline
  University of Udine\newline
  Udine, UD 33100, IT}\par\nopagebreak
  \textit{E-mail address}: \url{alberto.marcone@uniud.it}

  \medskip

  Manlio Valenti, \textsc{Department of Mathematics, Computer Science and Physics\newline
  University of Udine\newline
  Udine, UD 33100, IT}\par\nopagebreak
  \textit{E-mail address}: \url{manliovalenti@gmail.com}
}}


\def\subjclass[#1]#2{
	\begingroup
	\renewcommand\thefootnote{}\footnote{\textup{#1} \textit{Mathematics Subject Classification}: #2}%
	\addtocounter{footnote}{-1}%
	\endgroup
}

\date{}

\title{On the descriptive complexity of Salem sets}


\begin{document}
\maketitle

\begin{abstract}
	In this paper we study the notion of Salem set from the point of view of descriptive set theory. We first work in the hyperspace $\hypCompact([0,1])$ of compact subsets of $[0,1]$ and show that the closed Salem sets form a $\boldfacePi^0_3$-complete family. This is done by characterizing the complexity of the family of sets having sufficiently large Hausdorff or Fourier dimension. We also show that the complexity does not change if we increase the dimension of the ambient space and work in $\hypCompact([0,1]^d)$. 
	We then generalize the results by relaxing the compactness of the ambient space, and show that the closed Salem sets are still $\boldfacePi^0_3$-complete when we endow the hyperspace of all closed subsets of $\mathbb{R}^d$ with the Fell topology. A similar result holds also for the Vietoris topology.
\end{abstract}

\subjclass[2020]{Primary: 03E15; Secondary: 28A75, 28A78, 03D32.}

\tableofcontents

\section{Introduction}

The notion of Salem set arises naturally in the context of geometric measure theory and the theory of fractal dimension. A set $A\subset \mathbb{R}^d$ is called \textdef{Salem} iff $\hdim(A)=\fourierdim(A)$, where $\hdim$ and $\fourierdim$ denote the Hausdorff and the Fourier dimension respectively.

Hausdorff dimension is a fundamental notion in geometric measure theory and can be found in almost every textbook in the field. It describes the ``size'' of a set using the diameter of open sets covering it. When working with Borel subsets of $\mathbb{R}^d$, the Hausdorff dimension of a set can be characterized by the existence of finite Radon measures supported on the set with certain regularity properties (see Section~\ref{sec:backgroud} for details). 

This characterization establishes a close connection with the Fourier transform of a measure. Indeed, it can be shown that the decay of the Fourier transform of a (probability) measure supported on the set provides a lower bound for the Hausdorff dimension. This leads to the notion of Fourier dimension and hence to the one of Salem set. It is known that, for Borel subsets of $\mathbb{R}^d$, the Fourier dimension never exceeds the Hausdorff dimension. 

The first non-trivial examples of Salem sets were based on random constructions (\cite{Salem1950,Kahane1993}). Later Kahane \cite{Kahane1970} modified the original construction by Salem to produce an explicit Salem set of dimension $\alpha$, for every $\alpha\in [0,1]$. An important example of an explicit Salem set comes from the theory of Diophantine approximation of real numbers: Jarn\'ik \cite{Jarnik1928} and Besicovitch \cite{Besicovitch1934} proved that, for $\alpha \ge 0$, the set $E(\alpha)$ of $\alpha$-well approximable numbers is a fractal with Hausdorff dimension $2/(2+\alpha)$. Kaufmann \cite{Kaufmann81} improved the result by showing that there is a probability measure supported on a subset of $E(\alpha)$ witnessing the fact that $\fourierdim(E(\alpha))\ge 2/(2+\alpha)$, which implies that $E(\alpha)$ is Salem (the reader is referred to \cite{Bluhm98} or \cite{Wolff03} for detailed proofs of Kaufmann's theorem).

A classical example of a non-Salem set is Cantor middle-third set, which has Fourier dimension $0$ and Hausdorff dimension $\log(2)/\log(3)$. Similarly, every symmetric Cantor set with dissection ratio $1/n$, with $n>2$, is not a Salem set, as it has null Fourier dimension and Hausdorff dimension $\log(2)/\log(n)$ (see \cite[Sec.\ 4.10]{Mattila95} and \cite[Thm.\ 8.1]{MattilaFA}). It can be proved that, for every $0\le x \le y \le 1$ there is a compact subset of $[0,1]$ with Fourier dimension $x$ and Hausdorff dimension $y$ (\cite[Thm.\ 1.4]{Korner11}).

There are not many explicit (i.e.\ non-random) examples of subsets of $\mathbb{R}^d$ which are known to be Salem. As a corollary of a result of Gatesoupe \cite{Gatesoupe67}, we know that if $A\subset \mathbb{R}$ has at least two points and is a Salem set of dimension $\alpha$ then the set $\{ x \in \mathbb{R}^d \st |x|\in A \}$ is Salem and has dimension $d-1+\alpha$. Recently, using a higher-dimensional analogue of $E(\alpha)$, some explicit examples of Salem subsets of $\mathbb{R}^2$ and $\mathbb{R}^d$ (\cite{HambrookR2,FraserHambrook2019}) of arbitrary dimension have been constructed by Fraser and Hambrook.

In this paper we study the complexity, from the point of view of descriptive set theory, of the hyperspace of closed Salem subsets of $X$, where $X$ is either $[0,1]$, $[0,1]^d$ or $\mathbb{R}^d$. In other words, we study the complexity of the property ``being a Salem set'', when we restrict our attention to closed sets. We show that it is Borel and classify it in the Borel hierarchy. 

We summarize our results for $[0,1]$ in the following table.
\begin{center}
	\renewcommand{\arraystretch}{1.5}
	\begin{tabular}{ | c | c | c | }
		\hline 
		$p<1$ & $\{ A \in\hypCompact([0,1]) \st \hdim(A)> p\}$ & \hyperref[thm:dim>p_sigma02_complete]{$\boldfaceSigma^0_2$-complete} \\ 
		$p>0$ & $\{ A \in\hypCompact([0,1]) \st \hdim(A)\ge p\}$ & \hyperref[thm:dim_>=p_pi03_complete]{$\boldfacePi^0_3$-complete} \\ 
		$p<1$ & $\{ A \in\hypCompact([0,1]) \st \fourierdim(A)> p\}$& \hyperref[thm:dim>p_sigma02_complete]{$\boldfaceSigma^0_2$-complete} \\ 
		$p>0$ & $\{ A \in\hypCompact([0,1]) \st \fourierdim(A)\ge p\}$& \hyperref[thm:dim_>=p_pi03_complete]{$\boldfacePi^0_3$-complete} \\ 
		\hline
		\multicolumn{2}{| c |}{ $\{ A \in\hypCompact([0,1])\st A \text{ is Salem}\}$ }&\hyperref[thm:salem_pi03_complete]{$\boldfacePi^0_3$-complete} \\ 
		\hline
	\end{tabular}
\end{center}
\noindent The complexities remain the same if we replace $[0,1]$ with any interval, with $[0,1]^d$ or $\mathbb{R}^d$. 
In particular, the fact that the family of closed Salem subsets of $[0,1]$ is $\boldfacePi^0_3$-complete answers a question asked by Slaman during the IMS Graduate Summer School in Logic, held in Singapore in 2018.

\medskip

After reviewing the necessary background in Section \ref{sec:backgroud}, we devote Section \ref{sec:salem_[0,1]} to the case $[0,1]$. Here the proofs of the lower bounds for the complexities exploit Kaufmann's construction, and provide explicitly the details of the closed sets involved. In Section \ref{sec:salem_[0,1]d} we use a more abstract argument based on the Fraser/Hambrook's construction to extend the results to $[0,1]^d$. Section \ref{sec:R^d} is concerned with $\mathbb{R}^d$, and here more care is needed to prove the upper bounds.

Our results can be used to obtain the classifications of the functions computing the dimensions of closed sets, both in the Baire hierarchy and in the effective hierarchy defined via Weihrauch reducibility, in particular answering a question raised by Fouch\'e (\cite{Dagstuhl2016}) and Pauly. The details will be explored in a forthcoming paper.

\subsubsection*{Acknowledgements}

The early investigations leading to this paper were motivated by the above-mentioned question asked by Ted Slaman. The results of Section \ref{sec:salem_[0,1]} are indeed joint work with Ted Slaman and Jan Reimann. 

We would also like to thank Geoffrey Bentsen, Riccardo Camerlo, Kyle Hambrook, Arno Pauly, and Linda Brown Westrick for useful discussions and suggestions on the topics of the paper. We also thank the anonymous referee for his/her careful reading.

Both author's research was partially supported by the Italian PRIN 2017 Grant ``Mathematical Logic: models, sets, computability". Valenti's participation in the IMS Graduate Summer School in Logic in $2018$ was partially funded by the Institute for Mathematical Sciences (National University of Singapore).

\section{Background}
\label{sec:backgroud}
For a general introduction to geometric measure theory the reader is referred to \cite{Falconer14}. Here we introduce the notions and notations we will use throughout the rest of the paper. 

Let $X$ be a separable metric space and let $A\subset X$. Let also $\diam{A}$ denote the diameter of $A$. We say that a family $\family{E_i}{i\in I}$ is a $\delta$-cover of $A$ if $A\subset \bigcup_{i\in I} E_i$ and $\diam{E_i}\le \delta$ for each $i\in I$. For every $s\ge 0$, $\delta\in (0,+\infty]$ we define
\begin{align*}
	\hmeas^s_\delta(A) & := \inf\left\{ \sum_{i\in I} \diam{E_i}^s \st \family{E_i}{i\in I} \text{ is a }\delta\text{-cover of }A \right\},\\
	\hmeas^s(A) & := \lim_{\delta \to 0^+} \hmeas^s_\delta(A) = \sup_{\delta>0} \hmeas^s_\delta(A).
\end{align*}
The function $\hmeas^s$ is called \textdef{$s$-dimensional Hausdorff measure}. The \textdef{Hausdorff dimension} of $A$ is defined as 
\[ 	\hdim(A):= \sup\{ s\in [0,+\infty)\st \hmeas^s(A) >0\}. \]
As a consequence of Frostman's lemma (see \cite[Thm.\ 8.8, p.\ 112]{Mattila95}), for every Borel subset $A$ of $\mathbb{R}^d$ (with the Euclidean norm), the Hausdorff dimension of $A$ coincides with its capacitary dimension $\capdim(A)$, defined as 
\[ \sup \{ s\in [0,d] \st (\exists \mu \in \ProbabilityMeas(A))(\exists c>0)(\forall x \in \mathbb{R}^d)(\forall r>0) (\mu(\ball{x}{r})\le cr^s)\}, \]
where $\ProbabilityMeas(A)$ is the set of Borel probability measures with support included in $A$ and $\ball{x}{r}$ denotes the ball with center $x$ and radius $r$. We notice that the Hausdorff dimension is \textdef{countably stable} (i.e.\ for every family $\family{A_i}{i\in\mathbb{N}}$ we have $\hdim(\bigcup_i A_i) = \sup_i \hdim(A_i)$, see \cite[p.\ 59]{Mattila95}) and, for every $\alpha$-H\"older continuous map $f\colon\mathbb{R}^n\to \mathbb{R}^m$ we have $\hdim(f(A))\le \alpha^{-1}\hdim(A)$ (see \cite[Prop.\ 3.3, p.\ 49]{Falconer14}). In particular every bi-Lipschitz map preserves the Hausdorff dimension.

For every probability measure $\mu$ on $\mathbb{R}^d$, we can define the \textdef{Fourier transform of $\mu$} as the function $\fouriertransform{\mu}\colon\mathbb{R}^d\to \mathbb{C}$ defined as 
\[ \fouriertransform{\mu}(\xi) := \int_{\mathbb{R}^d} e^{-i\, \scalarprod{\xi}{x}} \dd{\mu(x)} \]
where $\scalarprod{\xi}{x}$ denotes the scalar product. We define the \textdef{Fourier dimension} of $A\subset \mathbb{R}^d$, written $\fourierdim(A)$, as 
\[ \sup\{ s\in [0,d]\st (\exists \mu\in \ProbabilityMeas(A))(\exists c>0)(\forall x\in \mathbb{R}^d)(|\fouriertransform{\mu}(x)|\le c|x|^{-s/2} ) \}. \]
If we define $\fourierdim(\mu):=\sup\{ s\in [0,d]\st (\exists c>0)(\forall x\in \mathbb{R}^d)\,(|\fouriertransform{\mu}(x)|\le c|x|^{-s/2} ) \}$ then we have $\fourierdim(A)=\sup\{ \fourierdim(\mu) \st \mu \in \ProbabilityMeas(A)\}$. For background notions on the Fourier transform the reader is referred to \cite{SteinWeiss}. For its applications to geometric measure theory see \cite{MattilaFA}.

The Fourier dimension is not as stable as the Hausdorff dimension. Some stability properties of the Fourier dimension have been investigated in \cite{EPS2014}. We underline, however, that the definition of Fourier dimension given in \cite{EPS2014} differs from the definition we use in this work (which agrees with the one that can be found in the literature \cite{Falconer14,Mattila95,MattilaFA, Wolff03}). The ``classical'' definition of Fourier dimension agrees with the \textdef{compact Fourier dimension} $\fourierdimcomp$ of \cite[Sec.\ 1.3]{EPS2014} (this can be showed, e.g., using \cite[Lem.\ 1]{EPS2014}). These notions agree if we restrict our attention to the dimension of closed sets. In general, requiring that the measure $\mu$ witnessing that $\fourierdim(A)>s$ gives full measure to $A$ is strictly weaker\footnote{In \cite[Ex.\ 7]{EPS2014}, the authors show that there is a set $X$ s.t.\ $X$ is a countable union of compact sets and $\fourierdimcomp(X)=\fourierdim(X)= 0$. However, admitting measures giving full measure to the set would give $X$ full dimension.} 
than requiring that $\mu$ is supported on $A$.

The fact that $\fourierdim=\fourierdimcomp$ implies that the Fourier dimension is inner regular for compact sets, i.e.\ 
\[ \fourierdim(A) = \sup \{ \fourierdim(K) \st K \subset A \text{ and } K \text{ is compact} \}. \]

On the other hand, the Fourier dimension is not finitely stable in general: the Bernstein set $B\subset \mathbb{R}$ (see \cite[Example 8.24]{KechrisCDST}) is s.t.\ every closed subset of $B$ or $\mathbb{R}\setdifference B$ is countable, and therefore $\fourierdim(B) = \fourierdim(\mathbb{R}\setdifference B) = 0$. On the other hand $\fourierdim(B \cup \mathbb{R}\setdifference B) = \fourierdim(\mathbb{R}) = 1$ (see also \cite[Sec.\ 1.3]{EPS2014}).

We can recover countable stability if we restrict our attention to closed sets: 

\begin{theorem}[{\cite[Prop.\ 5]{EPS2014}}]
	\thlabel{thm:fourier_sup_countable}
	If $\family{A_k}{k}$ is a finite or countable family of closed subsets of $\mathbb{R}^d$ then 
	\[ \fourierdim\left(\bigcup A_k\right) = \sup_k \fourierdim(A_k). \]
\end{theorem}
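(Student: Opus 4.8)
The plan is to prove the two inequalities separately. The inequality $\fourierdim\big(\bigcup_k A_k\big) \ge \sup_k \fourierdim(A_k)$ is immediate from monotonicity of $\fourierdim$: every $\nu \in \ProbabilityMeas(A_k)$ belongs to $\ProbabilityMeas\big(\bigcup_k A_k\big)$. So the real content is the reverse inequality. Fix $\mu \in \ProbabilityMeas\big(\bigcup_k A_k\big)$ and a real $s$ with $0 < s < \fourierdim(\mu)$, so that $|\fouriertransform{\mu}(\xi)| \le c|\xi|^{-s/2}$ for all $\xi \neq 0$ and some $c > 0$. It suffices to find an index $n$ and a probability measure $\nu$ concentrated on $A_n$ with $\fourierdim(\nu) \ge s$: taking the supremum first over $s < \fourierdim(\mu)$ and then over $\mu \in \ProbabilityMeas\big(\bigcup_k A_k\big)$ then yields $\sup_k \fourierdim(A_k) \ge \fourierdim\big(\bigcup_k A_k\big)$ (the case $s=0$ being trivial, as $\fourierdim(B)\ge 0$ for any nonempty $B$).

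I would first perform a purely combinatorial localization of $\mu$. Since $s > 0$ we have $|\fouriertransform{\mu}(\xi)| \to 0$ as $|\xi| \to \infty$, and it is classical that a measure whose Fourier transform vanishes at infinity has no atoms; hence the countable set $C := \bigcup_n \big(A_n \cap \closure{\bigcup_{m\neq n}A_m}\big)$ is $\mu$-null. Put $A_n^* := A_n \setminus C$ and let $U_n := \mathbb{R}^d \setminus \closure{\bigcup_{m\neq n}A_m}$, an open set. The sets $A_n^*$ are pairwise disjoint, $\mu\big(\bigcup_n A_n^*\big) = 1$, and one checks that $A_n^* \subseteq A_n \cap U_n$ while $A_m \cap U_n = \emptyset$ for every $m \neq n$; consequently $U_n \cap \bigcup_m A_m^* = A_n^*$, and since $\mu$ is concentrated on $\bigcup_m A_m^*$ this gives $\mu|_{U_n} = \mu|_{A_n^*}$ as measures. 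Choose $n$ with $\mu(A_n^*) = \mu(U_n) > 0$, possible since the numbers $\mu(A_n^*)$ sum to $1$. Pick $x_0 \in U_n \cap \mathrm{supp}\,\mu$ (nonempty, as $U_n$ is open with positive $\mu$-measure), a radius $r$ with $\ball{x_0}{r} \subseteq U_n$, and $\phi \in C_c^\infty(\mathbb{R}^d)$ with $0 \le \phi$, $\mathrm{supp}\,\phi \subseteq \ball{x_0}{r}$ and $\phi$ strictly positive on a neighbourhood of $x_0$; then $\int \phi \,\dd{\mu} > 0$ and $\nu := \big(\int \phi \,\dd{\mu}\big)^{-1}\phi\mu$ is a probability measure, which is concentrated on $A_n^* \subseteq A_n$ because $\phi\mu = \phi\cdot\mu|_{U_n} = \phi\cdot\mu|_{A_n^*}$.

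It then remains to see that the smooth cutoff has not destroyed the Fourier decay, i.e. that $|\fouriertransform{\nu}(\xi)| \le C|\xi|^{-s/2}$ for all $\xi \neq 0$. Up to a harmless multiplicative constant, $\fouriertransform{\phi\mu}$ is the convolution $\fouriertransform{\phi} * \fouriertransform{\mu}$. Splitting the convolution integral at $|\eta| = |\xi|/2$: on the region $|\eta| \le |\xi|/2$ one has $|\xi - \eta| \ge |\xi|/2$, so the polynomial decay of $\fouriertransform{\mu}$ applies and, since $\fouriertransform{\phi} \in L^1$, this part is $O(|\xi|^{-s/2})$; on the region $|\eta| > |\xi|/2$ one uses only that $\fouriertransform{\mu}$ is bounded together with the rapid (Schwartz) decay of $\fouriertransform{\phi}$, which makes this part $O(|\xi|^{-M})$ for every $M$. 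On any bounded set of frequencies $|\fouriertransform{\nu}|$ is bounded, which costs nothing. Hence $\fourierdim(\nu) \ge s$, completing the proof. I expect the main obstacle to be precisely this last estimate — converting the restriction $\mu|_{U_n}$ into an honest probability measure while retaining polynomial Fourier decay; the combinatorial reduction is engineered exactly so that the piece of $\mu$ that survives sits inside an open set, which is what makes a smooth cutoff available, and that reduction is the only place the hypothesis enters, through the atomlessness of $\mu$ forcing $\mu(C)=0$.
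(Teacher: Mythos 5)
Your proof is correct. Note, however, that the paper does not prove this statement at all: it is quoted as \cite[Cor.\ 4]{EPS2014}, so there is no internal proof to compare against. Your argument is essentially the one behind the cited result: Wiener's lemma turns the decay of $\fouriertransform{\mu}$ into atomlessness, so the countable overlap set is $\mu$-null and $\mu$ restricted to the open set $U_n$ lives on $A_n$ alone; the smooth-cutoff step (splitting $\fouriertransform{\phi}\ast\fouriertransform{\mu}$ at $|\eta|=|\xi|/2$) is exactly the localization lemma used in \cite{EPS2014}, and the more general \thref{thm:dimFM_stable} quoted later in the paper replaces your atomlessness step by the observation that countable sets have vanishing modified Fourier dimension. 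All the individual claims (disjointness of the $A_n^*$, $\mu|_{U_n}=\mu|_{A_n^*}$, positivity of $\int\phi\,\dd\mu$ for $x_0\in U_n\cap\operatorname{supp}\mu$, and the two convolution estimates) check out.
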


It is also known that the Fourier dimension does not behave well under H\"older continuous maps: there is a H\"older continuous transformation that maps the Cantor middle-third set to the interval $[0,1]$, although they have Fourier dimension respectively $0$ and $1$ (\cite[Sec.\ 8]{ESSurveyFourier}). However, the following fact, which we will use repeatedly in the paper, can be proved using the properties of the Fourier transform (see also \cite[Prop.\ 6]{ESSurveyFourier}):
\begin{fact}
	\thlabel{thm:fourier_invariant_affine}
	The Fourier dimension is invariant under affine invertible transformations.
\end{fact}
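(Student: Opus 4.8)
The plan is to prove that the Fourier dimension is invariant under affine invertible transformations by reducing to the behaviour of the Fourier transform of a pushforward measure. Let $T\colon\mathbb{R}^d\to\mathbb{R}^d$ be an affine invertible map, so that $T(x)=Lx+b$ for some invertible linear $L$ and some $b\in\mathbb{R}^d$. For a probability measure $\mu\in\ProbabilityMeas(A)$, the pushforward $T_*\mu$ is a probability measure supported on $T(A)$, and every element of $\ProbabilityMeas(T(A))$ arises this way (via $T^{-1}$, which is again affine invertible). Hence it suffices to show that $\fourierdim(\mu)=\fourierdim(T_*\mu)$, since $\fourierdim(A)=\sup\{\fourierdim(\nu)\st\nu\in\ProbabilityMeas(A)\}$ and the analogous identity for $T(A)$ follows by taking suprema over the corresponding families of measures.

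First I would compute $\fouriertransform{T_*\mu}$ explicitly. By the change-of-variables formula,
\[
	\fouriertransform{T_*\mu}(\xi)=\int_{\mathbb{R}^d} e^{-i\scalarprod{\xi}{y}}\dd{(T_*\mu)(y)}=\int_{\mathbb{R}^d} e^{-i\scalarprod{\xi}{Lx+b}}\dd{\mu(x)}=e^{-i\scalarprod{\xi}{b}}\int_{\mathbb{R}^d} e^{-i\scalarprod{L^\top\xi}{x}}\dd{\mu(x)}=e^{-i\scalarprod{\xi}{b}}\,\fouriertransform{\mu}(L^\top\xi).
\]
In particular $|\fouriertransform{T_*\mu}(\xi)|=|\fouriertransform{\mu}(L^\top\xi)|$, so the translation part $b$ is irrelevant and only the linear part matters. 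Now I would use the fact that an invertible linear map is bi-Lipschitz: there are constants $0<c_1\le c_2$ with $c_1|\xi|\le|L^\top\xi|\le c_2|\xi|$ for all $\xi$. Thus if $|\fouriertransform{\mu}(x)|\le c|x|^{-s/2}$ for all $x$, then $|\fouriertransform{T_*\mu}(\xi)|=|\fouriertransform{\mu}(L^\top\xi)|\le c|L^\top\xi|^{-s/2}\le c\,c_1^{-s/2}|\xi|^{-s/2}$ for all $\xi\ne 0$ (the bound at $\xi=0$ being automatic since $|\fouriertransform{\nu}(0)|=1$ for any probability measure and can be absorbed into the constant), which shows $\fourierdim(T_*\mu)\ge\fourierdim(\mu)$. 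Applying the same argument to $T^{-1}$ in place of $T$ yields the reverse inequality, hence equality.

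I do not expect a genuine obstacle here; the argument is a direct unwinding of definitions. The only points requiring a little care are: checking that the supremum defining $\fourierdim(A)$ is correctly transported under the bijection $\mu\mapsto T_*\mu$ between $\ProbabilityMeas(A)$ and $\ProbabilityMeas(T(A))$; handling the decay estimate uniformly by passing the bi-Lipschitz constants of $L^\top$ through the power $|x|^{-s/2}$ (so the witnessing constant $c$ changes by a harmless factor $c_1^{-s/2}$); and noting that the exceptional point $x=0$, which the decay condition in the definition formally includes, causes no trouble because $|\fouriertransform{\nu}(0)|=\nu(\mathbb{R}^d)=1$ for every probability measure $\nu$.
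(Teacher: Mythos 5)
Your proof is correct: the computation $\fouriertransform{T_*\mu}(\xi)=e^{-i\scalarprod{\xi}{b}}\fouriertransform{\mu}(L^\top\xi)$, the bi-Lipschitz bounds on $L^\top$ absorbing into the constant $c$, and the bijection $\mu\mapsto T_*\mu$ between $\ProbabilityMeas(A)$ and $\ProbabilityMeas(T(A))$ together give exactly the invariance claimed. The paper itself supplies no proof of this fact, deferring to the literature with the remark that it follows from properties of the Fourier transform; your argument is precisely that standard one, written out in full.
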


As a consequence of Frostman's lemma, for every Borel subset $A$ of $\mathbb{R}^d$, $\fourierdim(A)\le \hdim(A)$ (see \cite[Chap.\ 12]{Mattila95}). If $\fourierdim(A)= \hdim(A)$ then $A$ is called \textdef{Salem set}. When the Hausdorff and Fourier dimensions agree we drop the subscript and write only $\dim(\cdot)$. We denote with $\Salem(X)$ (resp.\ $\closedSalem(X)$) the collection of Salem subsets (resp.\ closed Salem subsets) of $X\subset \mathbb{R}^d$.

\medskip
In this work, we study the descriptive set-theoretic properties of the families $\closedSalem([0,1]^d)$ and $\closedSalem(\mathbb{R}^d)$. For an extended presentation of descriptive set theory the reader is referred to \cite{KechrisCDST}. 

Let $X$ be a metric space. It is known that the family of Borel subsets of $X$ can be stratified in a hierarchy, called the \textdef{Borel hierarchy}. Let $\omega_1$ be the first uncountable ordinal. The levels of the Borel hierarchy are defined by transfinite recursion on $1\le \xi< \omega_1$ as follows: we start from the families $\boldfaceSigma^0_1(X)$ and $\boldfacePi^0_1(X)$ of the open and the closed subsets of $X$ respectively. Then, for every $\xi>1$ we define:
\begin{itemize}
	\item[] $\boldfaceSigma^0_\xi(X) := \left\{\bigcup_n A_n \st A_n\in \boldfacePi^0_{\xi_n}(X),\, \xi_n< \xi,\, n\in\mathbb{N}\right\} $,
	\item[] $\boldfacePi^0_\xi(X) := \{ X\setdifference A \st A \in \boldfaceSigma^0_\xi(X) \} $.
\end{itemize}
Moreover, for every $\xi$, we define $\boldfaceDelta^0_\xi(X) := \boldfaceSigma^0_\xi(X) \cap \boldfacePi^0_\xi(X)$. In particular, $\boldfaceDelta^0_1(X)$ is the family of clopen subsets of $X$. The families $\boldfaceSigma^0_2(X)$ and $\boldfacePi^0_2(X)$ are often written resp.\ $\boldsymbol{F}_\sigma(X)$ and $\boldsymbol{G}_\delta(X)$. It is known that $\borel(X)=\bigcup_\xi \boldfaceSigma^0_\xi(X) = \bigcup_\xi \boldfacePi^0_\xi(X) = \bigcup_\xi \boldfaceDelta^0_\xi(X)$, where $\borel(X)$ denotes the family of Borel subsets of $X$. Whenever there is no ambiguity we will drop the dependency from the space $X$, and simply write $\boldfaceSigma^0_\xi$, $\boldfacePi^0_\xi$ and $\boldfaceDelta^0_\xi$.

Let $X$ and $Y$ be topological spaces and $A\subset X$, $B\subset Y$. We say that $A$ is \textdef{Wadge reducible} to $B$, and write $A \wadgereducible B$, if there is a continuous function $f\colon X\to Y$ s.t.\ $x\in A$ iff $f(x)\in B$. It is easy to see that if $\boldfaceGamma$ is among $\boldfaceSigma^0_\xi$, $\boldfacePi^0_\xi$, $\boldfaceDelta^0_\xi$ then $\boldfaceGamma$ is closed under continuous preimages, i.e.\ $A\wadgereducible B$ and $B\in \boldfaceGamma(Y)$ implies $A \in \boldfaceGamma(X)$.

Fix a class $\boldfaceGamma$ as above. Assume $Y$ is a Polish (i.e.\ separable and completely metrizable) space and $B\subset Y$.  We say that $B$ is \textdef{$\boldfaceGamma$-hard} if $A\wadgereducible B$ for every $A\in \boldfaceGamma(X)$, where $X$ is Polish and has a basis consisting of clopen sets. If $B$ is $\boldfaceGamma$-hard and $B \in \boldfaceGamma(Y)$ then we say that $B$ is \textdef{$\boldfaceGamma$-complete}.

A common technique to show that a set $B\subset X$ is $\boldfaceGamma$-hard is to show that there is a Wadge reduction $A\wadgereducible B$, for some $A$ which is already known to be $\boldfaceGamma$-complete. Standard examples of $\boldfaceGamma$-complete sets are the following (see \cite[Sec.\ 23.A, p.\ 179]{KechrisCDST}):
\begin{center}
	\renewcommand{\arraystretch}{1.5}
	\begin{tabular}{  l l  }
		$Q_2:= \{ x\in \Cantor \st (\forall^\infty m)(x(m)=0)\}$ & $\boldfaceSigma^0_2$-complete, \\ 
		$N_2:= \{ x\in \Cantor \st (\exists^\infty m)(x(m)=0)\}$ & $\boldfacePi^0_2$-complete, \\ 
		$S_3:= \{ x\in 2^{\mathbb{N}\times \mathbb{N}} \st (\exists k)(\exists^\infty m)(x(k,m)=0)\}$ & $\boldfaceSigma^0_3$-complete, \\ 
		$P_3:= \{ x\in 2^{\mathbb{N}\times \mathbb{N}} \st (\forall k)(\forall^\infty m)(x(k,m)=0)\}$ & $\boldfacePi^0_3$-complete,
	\end{tabular}
\end{center}
where $(\exists^\infty m)$ and $(\forall^\infty m)$ mean respectively $(\forall n)(\exists m\ge n)$ and $(\exists n)(\forall m\ge n)$.

For a topological space $X$, we denote by $\hypClosed(X)$ and $\hypCompact(X)$ respectively the hyperspaces of closed and compact subsets of $X$. 

There is no canonical choice for the topology on $\hypClosed(X)$, and several alternatives have been explored in the literature \cite{Beer1993,KleinThom84}. Let $\mathscr{U}$ be the collection of sets of the form
\[ \{F\in \hypClosed(X) \st F \cap C = \emptyset \},\]
where $C$ ranges over all closed subsets of $X$. The topology having $\mathscr{U}$ as a prebase is called \textdef{upper topology} or \textdef{upper Vietoris topology} (\cite[Def.\ 1.3.1]{KleinThom84}). In the same spirit, we can define $\mathscr{L}$ as the family of sets of the form
\[\{F\in \hypClosed(X) \st F \cap U \neq \emptyset\},\]
where $U$ ranges over the open subsets of $X$. The topology having $\mathscr{L}$ as a prebase is called \textdef{lower topology} or \textdef{lower Vietoris topology} (\cite[Def.\ 1.3.2]{KleinThom84}). The \textdef{Vietoris topology} is the topology having as a prebase the family $\mathscr{L} \cup \mathscr{U}$.

The Vietoris topology is not always the preferred choice. As an alternative, we can consider the collection $\mathscr{U}_\mathcal{K}$ of sets of the form
\[ \{F\in \hypClosed(X) \st F \cap K = \emptyset \},\]
where $K$ ranges over all compact subsets of $X$. The family $\mathscr{U}_\mathcal{K}$ is a prebase for a topology on $\hypClosed(X)$ called \textdef{upper Fell topology}. We can define the \textdef{Fell topology} on $\hypClosed(X)$ as the topology having as a prebase the set $\mathscr{U}_\mathcal{K} \cup \mathscr{L}$. For this reason, the lower Vietoris topology is often called \textdef{lower Fell topology}. 

Unlike the hyperspace $\hypClosed(X)$, there is a canonical choice for the topology for the hyperspace $\hypCompact(X)$ of compact subsets of $X$. In fact, $\hypCompact(X)$ is usually endowed with the topology induced from the Vietoris topology on $\hypClosed(X)$.

If $X$ is a bounded metric space with distance $d$, we can define the \textdef{Hausdorff metric} $\hmetric$ on $\hypCompact(X)$ as follows:
\[ \hmetric(K,L) := \begin{cases}
	0 & \text{if } K=L=\emptyset\\
	\diam{X} & \text{if exactly one among }K \text{ and } L\text{ is }\emptyset\\
	\max\{\delta(K,L),\delta(L,K) \} &\text{otherwise,}
\end{cases} \]
where $\delta(K,L) := \max_{x\in K} d(x,L)$. It is known that the Hausdorff metric $\hmetric$ is compatible with the Vietoris topology on $\hypCompact(X)$ (\cite[Ex.\ 4.21]{KechrisCDST}) and that if $X$ is Polish then so is $\hypCompact(X)$ (\cite[Thm.\ 4.22]{KechrisCDST}). 

The choice of the Vietoris topology is, of course, not the only possible: any topology on $\hypClosed(X)$ induces a topology on $\hypCompact(X)$. In particular, we will write $\hypCompactUF(X)$ for the hyperspace of compact subsets of $X$ endowed with the upper Fell topology.

One of the main reasons why the Vietoris topology is not the canonical choice for $\hypClosed(X)$ is that it is not paracompact, and hence metrizable\footnote{Intuitively, the $\max$ in the definition of $\delta(K,L)$ is not guaranteed to exist, and two closed sets can be infinitely distant.}, if $X$ is not compact (\cite[Thm.\ 2]{Ke70}). On the other hand, if $X$ Polish and locally compact then the Fell topology on $\hypClosed(X)$ gives rise to a Polish and its Borel space is exactly the Effros-Borel space (\cite[Ex.\ 12.7]{KechrisCDST}). Notice that the Fell and the Vietoris topologies coincide if $X$ is compact.

An important topological space is the space of Borel probability measures. If $X$ is a separable metrizable space, we consider the space $\ProbabilityMeas(X)$ of Borel probability measures on $X$, endowed with the topology generated by the maps $\mu \mapsto \int f\dd\mu$, with $f\in\cont[][b]{X}$ (i.e.\ $f\colon X \to \mathbb{R}$ is continuous and bounded, see \cite[Sec.\ 17.E, p.\ 109]{KechrisCDST}). A basis for the topology on $\ProbabilityMeas(X)$ is the family of sets of the form 
\[ U_{\mu,\varepsilon,f_0,\hdots,f_n} := \left\{ \nu\in\ProbabilityMeas(X) \st (\forall i\le n)\left( \left| \int_X f_i \dd{\nu} - \int_X f_i \dd{\mu} \right| < \varepsilon\right) \right\}, \]
where $\mu\in\ProbabilityMeas(X)$, $\varepsilon>0$, and $f_i\in\cont[][b]{X}$  for every $i$. If $X$ is compact metrizable then so is $\ProbabilityMeas(X)$ (\cite[Thm.\ 17.22]{KechrisCDST}). Moreover, if $X$ is Polish then so is $\ProbabilityMeas(X)$ (\cite[Thm.\ 17.23]{KechrisCDST}).

We conclude this section with the following lemma:
\begin{lemma}[{\cite[Lem.\ 1.3]{AndrMarcODE97}}]
	\thlabel{thm:AM_Sigma02}
	Let $X$ be Polish and $Y$ metrizable and $\mathbf{K}_\sigma$ (i.e.\ countable union of compact sets). If $F\subset X\times Y$ is $\boldfaceSigma^0_2$ then $\operatorname{proj}_X(F)$ is also $\boldfaceSigma^0_2$.
\end{lemma}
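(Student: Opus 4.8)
\emph{Proof proposal.} The plan is to reduce the statement to the classical fact that a projection along a compact factor is a closed map. Since $F$ is $\boldfaceSigma^0_2$, write $F = \bigcup_{n\in\mathbb{N}} C_n$ with each $C_n$ closed in $X\times Y$. Since $Y$ is $\mathbf{K}_\sigma$, fix compact sets $L_m\subseteq Y$ with $Y = \bigcup_m L_m$, and put $K_m := L_0\cup\dots\cup L_m$, so that $(K_m)_{m\in\mathbb{N}}$ is an increasing sequence of compact sets with union $Y$. Then
\[ \operatorname{proj}_X(F) = \bigcup_{n\in\mathbb{N}} \operatorname{proj}_X(C_n) = \bigcup_{n,m\in\mathbb{N}} \operatorname{proj}_X\big( C_n \cap (X\times K_m) \big). \]

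The core step is to check that each set $\operatorname{proj}_X\big(C_n\cap(X\times K_m)\big)$ is closed in $X$. The set $C_n\cap(X\times K_m)$ is closed in the subspace $X\times K_m$, and the restriction of $\operatorname{proj}_X$ to $X\times K_m$ is a closed map because $K_m$ is compact. This is the usual consequence of the tube lemma: if $C\subseteq X\times K$ is closed and $x\notin\operatorname{proj}_X(C)$, then for every $y\in K$ there is a basic open box $U_y\times V_y\ni(x,y)$ disjoint from $C$; by compactness of $\{x\}\times K$ finitely many $V_{y_1},\dots,V_{y_k}$ cover $K$, and then $\bigcap_{i\le k}U_{y_i}$ is an open neighbourhood of $x$ disjoint from $\operatorname{proj}_X(C)$. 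Hence each $\operatorname{proj}_X\big(C_n\cap(X\times K_m)\big)$ is closed in $X$, so in particular $\boldfaceSigma^0_2$.

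Putting this together, $\operatorname{proj}_X(F)$ is a countable union of closed subsets of $X$, hence $\boldfaceSigma^0_2$ by definition, which finishes the argument. I do not expect a genuine obstacle here: the only point requiring a little care is that $C_n\cap(X\times K_m)$ need not be closed in $X\times Y$---it is only closed in the subspace $X\times K_m$---but that is exactly the hypothesis under which the closed-map property of the projection applies, so nothing more is needed.
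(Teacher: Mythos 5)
Your proof is correct. The paper does not actually prove this lemma---it imports it from \cite{AndrMarcODE97}---but your argument (decompose $F$ into countably many closed sets, exhaust $Y$ by an increasing sequence of compacta, and use that projection along a compact factor is a closed map via the tube lemma) is precisely the standard proof of this fact, so there is nothing to compare against beyond noting that it matches the cited source. One small remark: since $Y$ is metrizable and hence Hausdorff, each $K_m$ is closed in $Y$, so $C_n\cap(X\times K_m)$ is in fact closed in all of $X\times Y$ and not merely in the subspace $X\times K_m$; your closing caveat is therefore unnecessary rather than a genuine point of care, and either way the closed-map property of $\operatorname{proj}_X$ restricted to $X\times K_m$ applies as you say.
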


\section{The complexity of closed Salem subsets of \texorpdfstring{$[0,1]$}{[0,1]} }
\label{sec:salem_[0,1]}

In this section, we determine the exact complexity of the family of closed Salem subsets of $[0,1]$. We first obtain an upper bound for the complexity of the conditions $\hdim(A)>p$, $\hdim(A)\ge p$, $\fourierdim(A)>p$ and $\fourierdim(A)\ge p$. Since the upper Fell topology is coarser than the Vietoris topology, 
obtaining an upper bound for the above conditions when the hyperspace of compact subsets of $[0,1]$ is endowed with the upper Fell topology immediately yields an upper bound for the same conditions when the hyperspace is endowed with the Vietoris topology instead.

\begin{lemma}
	\thlabel{thm:mu(K)>=x_closed}
	Let $X$ be a closed subset of $\mathbb{R}^d$. The set 
	\[ \{ (\mu,K,x) \in \ProbabilityMeas(X)\times \hypCompactUF(X) \times \mathbb{R} \st \mu(K) \ge x \}\]
	is closed.
\end{lemma}
\begin{proof}
	We prove that the complement is open. Let $(\mu,K,x)$ be s.t.\ $\mu(K)=x-\varepsilon<x$. By the outer regularity of $\mu$, there are two open sets $U,V$ s.t.\ 
	\begin{itemize}
		\item $K\subset U \subset \closure{U} \subset V \subset X$,
		\item $\mu(V)<x-\varepsilon/2$.
	\end{itemize}
	Similarly, by the inner regularity of $\mu$, there are two open sets $W,Z$ s.t.\ 
	$X\setdifference W$ is compact and 
	\begin{itemize}
		\item $W\subset \closure{W} \subset Z$,
		\item $\mu(Z)<\varepsilon/8$.
	\end{itemize}

	By Urysohn's lemma, there are two continuous functions $f,g\colon X \to [0,1]$ s.t.\ $f(\closure{U})=1=g(\closure{W})$ and $f(X\setdifference V)=0=g(X \setdifference Z)$.
	
	Recall that the set $U_{\mu,\varepsilon/16,f,g}$, i.e.\
	\[ \left\{ \nu\in \ProbabilityMeas(X)\st \left| \int f \dd{\nu} - \int f \dd{\mu} \right| <\frac{\varepsilon}{16} \land \left| \int g \dd{\nu} - \int g \dd{\mu} \right| <\frac{\varepsilon}{16}\right\} \]
	is a basic open set for $\ProbabilityMeas(X)$. Define the set 
	\[		\mathcal{U}:=\{ H \in \hypCompactUF(X) \st H \subset U\cup W \}.	\]
	Notice that $U \cup W$ has compact complement, hence $\mathcal{U}$ is a basic open subset of $\hypCompactUF(X)$.

	We claim that for every $(\nu, H, y)\in U_{\mu,\varepsilon/16,f,g}\times \mathcal{U}\times \ball{x}{\varepsilon/4}$ we have $\nu(H)<y$. Indeed 
	\begin{align*}
		\nu(H) & \le \nu(U)+\nu(W) \le \int f \dd{\nu} + \int g \dd{\nu} \\
			& \le \int f \dd{\mu}+\int g \dd{\mu}+\frac{\varepsilon}{8}\\
			& \le \mu(V) + \mu(Z) +\frac{\varepsilon}{8} < x - \frac{\varepsilon}{4}<y.  \qedhere
	\end{align*}
\end{proof}

Notice that the same set is not closed if we consider the lower Fell topology on $\hypCompact(X)$, essentially because $X$ belongs to every non-empty open set of it.
 
\begin{proposition}
	\thlabel{thm:capdim_complexity}
	${}$
	\begin{itemize}
		\item \label{itm:dim_c>p_is_S02} $\{ (A,p) \in\hypCompactUF([0,1])\times [0,1] \st \hdim(A)> p\}$ is $\boldfaceSigma^0_2$;
		\item \label{itm:dim_c>=p_is_P03} $\{ (A,p) \in\hypCompactUF([0,1])\times [0,1] \st \hdim(A)\ge p\}$ is $\boldfacePi^0_3$.
	\end{itemize}
\end{proposition}
\begin{proof}
	As noticed in the previous section, for Borel (in particular closed) $A \subset [0,1]$, the Hausdorff dimension $\hdim(A)$ coincides with the capacitary dimension $\capdim(A)$. For ease of readability, we denote with $D(A)$ the set  
\[ \{ s\in [0,1] \st (\exists \mu \in \ProbabilityMeas(A))(\exists c>0)(\forall x\in \mathbb{R})(\forall r>0)(\mu(\ball{x}{r})\le c r^s ) \}. \]	
In particular $\hdim(A)=\capdim(A) = \sup D(A)$. Notice that $D(A)$ is downward closed. Clearly
\[ \mu(\ball{x}{r})\le c r^s \iff \mu(\setcomplement[{[0,1]}]{\ball{x}{r}})\ge 1-c r^s. \]
Observe that the map $(x,r)\mapsto [0,1]\setdifference \ball{x}{r}$ is continuous when the codomain is endowed with the Vietoris topology. In particular, it is continuous as a function $\mathbb{R}^2 \to \hypCompactUF([0,1])$.  By \thref{thm:mu(K)>=x_closed} the condition $\mu(\ball{x}{r})\le c r^s$ is closed, hence the set 
\[ C:= \{ (s,c,\mu)\st (\forall x\in \mathbb{R})(\forall r>0)(\mu(\ball{x}{r})\le c r^s ) \} \]
is a closed subset of the product space $[0,1]\times [0,+\infty) \times \ProbabilityMeas(A)$. Notice also that 
\[ \mu\in \ProbabilityMeas(A)\iff \mu\in\ProbabilityMeas([0,1]) \text{ and } \mu(A)\ge 1. \]
Since the condition $\mu(A)\ge 1$ is closed (again, by \thref{thm:mu(K)>=x_closed}) we have that, for each closed subset $A$ of $[0,1]$, the set
\[ Q:= \{ (s,\mu)\in [0,1]\times \ProbabilityMeas([0,1])\st (\exists c>0) (\mu\in \ProbabilityMeas(A) \land (s,c,\mu) \in C )\} \]
is $\boldfaceSigma^0_2$. 

Recall that the space $\ProbabilityMeas([0,1])$ is metrizable and compact. Using \thref{thm:AM_Sigma02} we can conclude that the set $D(A) = \operatorname{proj}_{[0,1]} Q$ is $\boldfaceSigma^0_2$. To conclude the proof we notice that the conditions
\[ \hdim(A) > p \iff (\exists s\in \mathbb{Q})(s>p \land s\in D(A)),\]
\[ \hdim(A) \ge p \iff (\forall s\in \mathbb{Q})(s<p \rightarrow s\in D(A))\]
are $\boldfaceSigma^0_2$ and $\boldfacePi^0_3$ respectively.
\end{proof}

\begin{proposition}
	\thlabel{thm:fourierdim_complexity}
	${}$
	\begin{itemize}
		\item \label{itm:dim_F>p_is_S02} $\{ (A,p) \in\hypCompactUF([0,1])\times [0,1] \st \fourierdim(A)> p\}$ is $\boldfaceSigma^0_2$;
		\item \label{itm:dim_F>=p_is_P03} $\{ (A,p) \in\hypCompactUF([0,1])\times [0,1] \st \fourierdim(A)\ge p\}$ is $\boldfacePi^0_3$.
	\end{itemize}
\end{proposition}
\begin{proof}
	For the sake of readability, let 
	\[ D(A):=\{s\in[0,1]\st (\exists \mu \in \ProbabilityMeas(A))(\exists c>0)(\forall x\in \mathbb{R})(|\fouriertransform{\mu}(x)|\le c|x|^{-s/2}) \}.\] 
	First of all we notice that the condition $|\fouriertransform{\mu}(x)|>c |x|^{-s/2}$ is $\boldfaceSigma^0_1$. To see this it is enough to show that the map $F\colon \ProbabilityMeas([0,1])\times \mathbb{R}\to \mathbb{R}$ s.t.\ $F(\mu,x)=|\fouriertransform{\mu}(x)|$ is continuous. Indeed, if that is the case, then the tuple $(\mu,x,s,c)$ satisfies the condition $|\fouriertransform{\mu}(x)|>c |x|^{-s/2}$ iff it belongs to the preimage of $(0,+\infty)$ via the map $(\mu,x,s,c)\mapsto F(\mu,x)-c|x|^{-s/2}$, which is clearly continuous.
	 
	Recall that, for each finite Borel measure $\mu$, the Fourier transform $\fouriertransform{\mu}$ is a bounded uniformly continuous function. 
	
	Notice that the set 
	\[ V_{\mu,\varepsilon,x}:=\{ \nu \in \ProbabilityMeas([0,1])\st |\fouriertransform{\mu}(x)-\fouriertransform{\nu}(x)|<\varepsilon \} \]
	is open in the topology of $\ProbabilityMeas([0,1])$. Indeed, fix $\nu \in V_{\mu,\varepsilon,x}$ and let $\delta$ s.t.\ $|\fouriertransform{\mu}(x) -\fouriertransform{\nu}(x)|+\delta < \epsilon$. 
	We claim that the basic open set $U_{\nu,\frac{\delta}{2},\cos(x\,\cdot), \sin(x\,\cdot) }$ is included in $V_{\mu,\varepsilon,x}$.
	In fact, for each $\eta \in U_{\nu,\frac{\delta}{2},\cos(x\,\cdot), \sin(x\,\cdot) }$ we have 
	\begin{align*}
		|\fouriertransform{\nu} & (x) -\fouriertransform{\eta}(x)|  = \left| \int e^{-i\,xt}\dd{\nu}(t)-\int e^{-i\,xt}\dd{\eta}(t)\right| \le \\
			& \le \left| \int \cos(x t)\dd{\nu(t)} - \int \cos(x t)\dd{\eta(t)}\right| + \left| \int \sin(x t)\dd{\nu(t)} - \int \sin( x t)\dd{\eta(t)} \right|\le \delta
	\end{align*}
	and therefore
	\[ |\fouriertransform{\mu}(x) -\fouriertransform{\eta}(x)| \le |\fouriertransform{\mu}(x) -\fouriertransform{\nu}(x)| + |\fouriertransform{\nu}(x) -\fouriertransform{\eta}(x)| \le |\fouriertransform{\mu}(x) -\fouriertransform{\nu}(x)|+\delta<\varepsilon.\]

	To conclude the proof of the continuity	we show that for each $\varepsilon>0$ and each $\mu$ and $x$ we can choose $\delta$ sufficiently small s.t.\ for every $(\nu,y)\in V_{\mu,\delta,x} \times \ball{x}{\delta}$ we have $|\fouriertransform{\mu}(x)-\fouriertransform{\nu}(y)| < \varepsilon$. Indeed, by the triangle inequality
	\[ |\fouriertransform{\mu}(x)-\fouriertransform{\nu}(y)| \le |\fouriertransform{\mu}(x)-\fouriertransform{\nu}(x)| + |\fouriertransform{\nu}(x)-\fouriertransform{\nu}(y)|.  \]
	The first term is bounded by $\delta$ by definition of $V_{\mu,\delta,x}$. Moreover
	\begin{align*}
		|\fouriertransform{\nu}(x)&-\fouriertransform{\nu}(y)|  = \left| \int e^{-ixt}-e^{-iyt} \dd{\nu(t)} \right| \le \\
			& \le \int |\cos(xt)-\cos(yt)| \dd{\nu(t)}+  \int |\sin(xt)-\sin(yt)| \dd{\nu(t)}.
	\end{align*}
	By the sum-to-product formulas
	\begin{align*}
		\int |\cos(xt)-\cos(yt)| \dd{\nu(t)} & = \int 2\left| \sin\left(\frac{(x+y)t}{2}\right)\sin\left(\frac{(x-y)t}{2}\right) \right| \dd{\nu(t)} \le \\
			& \le 2 \sin\left(\frac{x-y}{2}\right) 
	\end{align*}
	and similarly
	\[\int |\sin(xt)-\sin(yt)| \dd{\nu(t)} \le  2 \sin\left(\frac{x-y}{2}\right). \]
	hence the claim follows.
	
	Since $\mu\in\ProbabilityMeas(A)$ is a closed condition (see the proof of \thref{thm:capdim_complexity}), the set 
	\begin{align*}
		\{ (s,c,\mu)\in [0,1]\times [0,+\infty) \times\ProbabilityMeas([0,1]) \st & \mu\in\ProbabilityMeas(A) \land{}\\
			& (\forall x\in \mathbb{R}^d) (|\fouriertransform{\mu}(x)|\le c |x|^{-s/2}) \} 
	\end{align*}
	is closed and, therefore, the set 
	\begin{align*}
		Q:= \{ (s,\mu)\in [0,1]\times\ProbabilityMeas([0,1]) \st (\exists c>0)(\forall x \in \mathbb{R}^d)(& \mu\in\ProbabilityMeas(A) \land{}\\
			&  |\fouriertransform{\mu}(x)|\le c|x|^{-s/2} ) \}
	\end{align*}
	is $\boldfaceSigma^0_2$. As in the proof of \thref{thm:capdim_complexity}, we can use \thref{thm:AM_Sigma02} to conclude that the set $D(A) = \operatorname{proj}_{[0,1]} Q$ is $\boldfaceSigma^0_2$ and hence the conditions 	
	\[ \fourierdim(A) > p \iff (\exists s\in \mathbb{Q})(s>p \land s\in D(A)),\]
	\[ \fourierdim(A) \ge p \iff (\forall s\in \mathbb{Q})(s<p \rightarrow s\in D(A))\]
	are $\boldfaceSigma^0_2$ and $\boldfacePi^0_3$ respectively.
\end{proof}
	
\begin{theorem}
	\thlabel{thm:salem_pi03}
	The set $\{ A \in\hypCompactUF([0,1])\st A \in \Salem([0,1])\}$	is $\boldfacePi^0_3$. 
\end{theorem}
\begin{proof}
	To prove that $ A \in \Salem([0,1])$ is a $\boldfacePi^0_3$ condition recall that, for Borel subsets of $\mathbb{R}^d$, $\fourierdim(A)\le \hdim(A)$.
	For a closed subset $A$ of $[0,1]$, the condition $\hdim(A)=\fourierdim(A)$ can be written as 
	\[ (\forall r\in \mathbb{Q})( \hdim(A)>r \rightarrow \fourierdim(A)>r ). \]
	The claim follows from \thref{thm:capdim_complexity} and \thref{thm:fourierdim_complexity}, as both the conditions $\hdim(A)>r$ and $\fourierdim(A)>r$ are $\boldfaceSigma^0_2$.
\end{proof}

We now show that the above conditions are complete for their respective classes (i.e.\ the upper bounds are tight) when the hyperspace of compact subsets of $[0,1]$ is endowed with the Vietoris topology. Since the Vietoris topology is finer than the upper Fell topology, the same lower bounds hold when the hyperspace of compact subsets of $[0,1]$ is endowed with the upper Fell topology.

The proof of the following \thref{thm:sigma02_map_salem} exploits the properties of the set $E(\alpha)$ of $\alpha$-well approximable numbers.

\begin{definition}[{\cite[Sec.\ 10.3, p.\ 172]{Falconer14}}]
	For every $\alpha\ge 0$, we say that $x\in [0,1]$ is \textdef{$\alpha$-well approximable} if there are infinitely many $n\in \mathbb{N}$ s.t.\ 
	\[ \min_{m\in\mathbb{Z}} | nx - m | \le n^{-1-\alpha}. \]
	The set of $\alpha$-well approximable numbers is denoted by $E(\alpha)$.
\end{definition}

As mentioned in the introduction, $E(\alpha)$ is a Salem set of dimension $2/(2+\alpha)$. Notice that, by definition, the set $E(\alpha)$ is $\boldfacePi^0_3$, as it can be written in the form
\[ E(\alpha) = \bigcap_{k\in\mathbb{N}} \bigcup_{n \ge k} G_n,\]
where $G_n:=\{ x\in [0,1] \st \min_{m\in\mathbb{Z}} | nx - m | \le n^{-1-\alpha} \}$ is a closed set (it is a finite union of non-degenerate closed intervals).

If $\alpha=0$ then, by Dirichlet's theorem (\cite[Ex.\ 10.8]{Falconer14}), $E(\alpha)=[0,1]$. However, if $\alpha>0$ then $E(\alpha)$ is not closed (because $E(\alpha)$ is dense in $[0,1]$ but does not have full dimension). 

In the construction presented in \cite{Bluhm98}, the author explicitly writes the support\footnote{In \cite{Bluhm98} it is denoted with $S_\alpha$.} $S(\alpha)$ of a measure witnessing that $\fourierdim(E(\alpha))\ge 2/(2+\alpha)$. This, in particular, implies that $S(\alpha)$ itself is Salem with dimension $2/(2+\alpha)$. The set $S(\alpha)$ can be written as 
\[ S(\alpha) = \bigcap_{k\in\mathbb{N}} \bigcup_{k'\le n \le k''} G_n. \]
In other words, it is obtained from $E(\alpha)$ by making the inner union finite, where $k'$ and $k''$ depend on $k$ and are strictly increasing. Clearly $S(\alpha)$ is closed (as it is the infinite intersection of closed sets). We can rewrite $S(\alpha)$ as follows:
\[ S(\alpha) = \bigcap_{k\in\mathbb{N}} S^{(k)}(\alpha) \]
where 
\[ S^{(k)}(\alpha) = \bigcup_{i\le M_k} I_i(\alpha,k) \]
and, for each $k$, the $I_i(\alpha,k)$ are disjoint non-degenerate closed intervals. 

We modify $S(\alpha)$ to obtain 
\[ R(\alpha) = \bigcap_{k\in\mathbb{N}} R^{(k)}(\alpha) =  \bigcap_{k\in\mathbb{N}} \bigcup_{j\le N_k} J_j(\alpha,k), \]
where each $J_j(\alpha,k)$ is a non-degenerate closed interval, with the property that $R^{(k+1)}(\alpha) \subset R^{(k)}(\alpha)$, and, moreover, for every $i\le N_k$ there exists $j\le N_{k+1}$ s.t.\ $J_j(\alpha,k+1)\subset J_i(\alpha,k)$. To this end, define $R^{(k)}(\alpha)$ inductively as follows: $R^{(0)}(\alpha):=S^{(0)}(\alpha)$. At stage $k+1$, let  
\[ \tilde R^{(k+1)}(\alpha):= S^{(k+1)}(\alpha) \cup \bigcup_{n \in U_k} G_n, \] 
where $U_k\subset \mathbb{N}$ is a finite set of indexes s.t.\ for every interval $j\le N_k$,
\[\interior(J_j(\alpha,k)) \cap \tilde R^{(k+1)}(\alpha) \neq \emptyset,\]
where $\interior(\cdot)$ denotes the interior. Such a choice of $U_k$ is always possible by the density of $E(\alpha)$. We obtain $R^{(k+1)}(\alpha)$ by considering the finitely many intervals whose union is $\tilde R^{(k+1)}(\alpha)\cap R^{(k)}(\alpha)$ and removing the degenerate ones. 

Notice that, for every $k$, $S^{(k)}(\alpha)\setminus R^{(k)}(\alpha)$ is finite. This implies that $S(\alpha) \setminus R(\alpha)$ is countable and therefore, by \thref{thm:fourier_sup_countable}, $\fourierdim(S(\alpha))=\fourierdim(R(\alpha))$. Notice, moreover, that $R(\alpha) \subset E(\alpha)$, and therefore $R$ is still a Salem set and $\dim(R(\alpha))=2/(2+\alpha)$.

\begin{lemma}
	\thlabel{thm:sigma02_map_salem} 
	For every $p\in [0,1]$ there exists a continuous map $f_p\colon \Cantor \to \hypCompact([0,1])$ s.t.\ for every $x$, $f_p(x)$ is Salem and  
	\[ \dim(f_p(x)) = \begin{cases}
		p & \text{if } x\in Q_2\\
		0 & \text{if } x\notin Q_2
	\end{cases}\]
\end{lemma}
\begin{proof}
	Recall that $Q_2 = \{ x\in \Cantor \st (\forall^\infty k) (x(k)=0) \}$
	is $\boldfaceSigma^0_2$-complete. 

	The case $p=0$ is trivial (just take the constant map $x\mapsto \emptyset$), so assume $p>0$. Let $\alpha\ge 0$ s.t.\ $2/(2+\alpha)=p$ and consider the Salem set $S(\alpha)$ as defined above.

	For each $x\in\Cantor$ we define a sequence $\sequence{F_x^{(k)}}{k\in\mathbb{N}}$ of nested closed sets s.t.\ each $F_x^{(k)}$ is a finite union of closed intervals. The idea is to follow the construction of $R(\alpha)$ until we find a $k$ s.t.\ $x(k)=1$. If this never happens then in the limit we obtain $R(\alpha)$, which is a Salem set of Fourier dimension $p$. On the other hand, each time we find a $k$ s.t.\ $x(k)=1$ we modify the next step of the construction by replacing each of the (finitely many) intervals $J_0,\hdots,J_{N_k}$ whose union is the $k$-th level of the construction with sufficiently small subintervals $H_0,\hdots,H_{N_k}$, and we reset the construction, starting again a (proportionally scaled down) construction of $R(\alpha)$ on each subinterval $H_i$. By carefully choosing the length of the subintervals $H_i$ we can ensure that, if there are infinitely many $k$ s.t.\ $x(k)=1$ then $F_x$ has null Hausdorff (and hence Fourier) dimension. 

	Formally, if $I=[a,b]$ is an interval then we define $R(\alpha, I)$ as the fractal obtained by scaling $R(\alpha)$ to the interval $I$. Notice that, by \thref{thm:fourier_invariant_affine}, $R(\alpha,I)$ is still a Salem set of dimension $p$. 
	
	We define $F^{(k)}_x$ recursively as
	\begin{description}
		\item[Stage $k=0$]: $F^{(0)}_x := [0,1]$;
		\item[Stage $k+1$]: Let $J_0,\hdots,J_{N_k}$ be the disjoint closed intervals s.t.\ $F^{(k)}_x=\bigcup_{i\le N_k} J_i$. If $x(k+1)=1$ then choose, for each $i\le N_k$, a (non-degenerate) subinterval $H_i=[a_i,b_i] \subset J_i$ so that  
		\[ \sum_{i\le N_k} \diam{H_i}^{2^{-k}} \le 2^{-k}. \]
		Define then $F^{(k+1)}_x:= \bigcup_{i\le N_k} H_i$.
		
		If $x(k+1)=0$ then let $s\le k$ be largest s.t.\ $x(s)=1$ (or $s=0$ if there is none) and let $I_0,\hdots,I_{N_s}$ be the intervals of $F^{(s)}_x$. For each $i\le N_s$, apply the $(k+1-s)$-th step of the construction of $R(\alpha, I_i)$. Define $F^{(k+1)}_x:=\bigcup_{i\le N_s} R^{(k+1-s)}(\alpha, I_i)$.
	 \end{description}  
	 
	We define the map $f_p$ as $f_p(x):= F_x=\bigcap_{k\in\mathbb{N}} F^{(k)}_x$. Clearly $F_x$ is closed, as intersection of closed sets. To show that $f_p$ is continuous, recall that the Vietoris topology is compatible with the Hausdorff metric $\hmetric$.
	Fix $x\in \Cantor$. For each $\varepsilon>0$ we can choose $k$ large enough so that all the intervals $J_0,\hdots,J_{N_k}$ of $F^{(k)}_x$ have length $\le \varepsilon$. By construction, for every $y\in \Cantor$ that extends $x[k]$ we have $F_y\cap J_i \neq \emptyset$ (i.e.\ none of the intervals is ever removed completely) and $F_y\subset J_0 \cup \hdots \cup J_{N_k}$ (i.e.\ nothing is ever added outside of $F^{(k)}_x$). This implies that
	\[ \hmetric(F_x, F_y) \le \max\{ \diam{J_i}\st i\le N_k \} \le \varepsilon, \]
	which proves the continuity.
	
	If $x\in Q_2$ then $x$ is eventually null (i.e.\ there are finitely many $1$s in $x$). Letting $s$ be the largest index s.t.\ $x(s)=1$ (or $s=0$ if there is none) then $F_x= \bigcup_{i\le N_s} R(\alpha, J_i)$. Each set $R(\alpha, J_i)$ is a Salem set of dimension $p$ (as we fixed $\alpha$ accordingly). Since the intervals $J_i$ are closed and disjoint, using \thref{thm:fourier_sup_countable}, we can conclude that $F_x$ is a Salem set of dimension $p$.
	
	On the other hand, if $x\notin Q_2$ then we want to show that $\hdim(F_x)=0$. We will show that for each $s>0$ and $\varepsilon>0$ there is a cover $\sequence{A_n}{n\in\mathbb{N}}$ of $F_x$ s.t.\ $\sum_{n\in\mathbb{N}} \diam{A_n}^s \le  \varepsilon$, i.e.\  for each $s>0$, $\hmeas^s(F_x)=0$. 
	
	For fixed $s$ and $\varepsilon$ we can pick $k$ large enough s.t.\ $2^{-k}\le s$, $2^{-k}\le \varepsilon$ and $x(k+1)=1$. Notice that the intervals $\sequence{H_i}{i\le N_k}$ (as defined in the construction of $F_x$) form a cover of $F_x$ s.t.\ 
	\[ \sum_{i\le N_k} \diam{H_i}^s \le \sum_{i\le N_k} \diam{H_i}^{2^{-k}}\le 2^{-k}\le \varepsilon, \]
	as desired.
\end{proof}	

\begin{proposition}
	\thlabel{thm:dim>p_sigma02_complete}
	For every $p<1$ the sets 
	\begin{align*}
		& \{ A \in\hypCompact([0,1]) \st \hdim(A)> p\},\\
		& \{ A \in\hypCompact([0,1]) \st \fourierdim(A)> p\}
	\end{align*}
	are $\boldfaceSigma^0_2$-complete.
\end{proposition}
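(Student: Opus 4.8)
The plan is to read off the proposition from what has already been established. For membership in $\boldfaceSigma^0_2$: for a fixed $p$ the map $A\mapsto (A,p)$ from $\hypCompact([0,1])$ into $\hypCompact([0,1])\times[0,1]$ is continuous, so the first bullets of \thref{thm:capdim_complexity} and \thref{thm:fourierdim_complexity} immediately give that $\{A\in\hypCompact([0,1])\st\hdim(A)>p\}$ and $\{A\in\hypCompact([0,1])\st\fourierdim(A)>p\}$ are $\boldfaceSigma^0_2$. It then remains to prove $\boldfaceSigma^0_2$-hardness of both sets, and here the idea is to produce a single Wadge reduction from the $\boldfaceSigma^0_2$-complete set $Q_2$ that works for both of them at once, using \thref{thm:sigma02_map_salem} as a black box.

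Concretely, fix $p$ with $0\le p<1$ (for $p<0$ both sets are the whole hyperspace, so there is nothing to prove) and pick any $q$ with $p<q<1$. Applying \thref{thm:sigma02_map_salem} to $q$ yields a continuous map $f_q\colon\Cantor\to\Salem([0,1])$ with $\dim(f_q(x))=q$ when $x\in Q_2$ and $\dim(f_q(x))=0$ when $x\notin Q_2$. Since $f_q(x)$ is always a Salem set, $\hdim(f_q(x))=\fourierdim(f_q(x))=\dim(f_q(x))$, and since $q>p\ge 0$ we obtain, for every $x\in\Cantor$,
\[ x\in Q_2 \iff \hdim(f_q(x))>p \iff \fourierdim(f_q(x))>p. \]
Hence $f_q$ witnesses $Q_2\wadgereducible\{A\st\hdim(A)>p\}$ and $Q_2\wadgereducible\{A\st\fourierdim(A)>p\}$; as $Q_2$ is $\boldfaceSigma^0_2$-complete, both target sets are $\boldfaceSigma^0_2$-hard, hence $\boldfaceSigma^0_2$-complete. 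All the steps being effective (and $q$ can be chosen computably when $p$ is), the same argument also yields the lightface version.

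There is essentially no genuine obstacle left at this point: the entire substance of the argument — building a continuous map from $\Cantor$ into the hyperspace whose value is a Salem set of dimension $p$ on $Q_2$ and of dimension $0$ off $Q_2$ — is precisely \thref{thm:sigma02_map_salem}. The only point requiring a little care is that the threshold used in the reduction, namely $q$, must be chosen \emph{strictly} above $p$, so that the ``dimension $0$'' outcome lands on the correct side of the inequality $>p$; this is exactly the place where the hypothesis $p<1$ enters, since it guarantees room for such a $q$ strictly below $1$, where \thref{thm:sigma02_map_salem} is available.
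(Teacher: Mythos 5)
Your argument is correct and takes essentially the same route as the paper: the $\boldfaceSigma^0_2$ upper bound is read off from \thref{thm:capdim_complexity} and \thref{thm:fourierdim_complexity} by fixing $p$, and hardness is obtained by choosing $q$ with $p<q<1$ and using the map $f_q$ of \thref{thm:sigma02_map_salem} as a single Wadge reduction from $Q_2$ that works for both sets because $f_q(x)$ is always Salem. (One tiny caveat: your aside that for $p<0$ ``there is nothing to prove'' is not quite right --- if the set were all of $\hypCompact([0,1])$ it would not be $\boldfaceSigma^0_2$-hard --- but the proposition implicitly takes $p\in[0,1)$, so this does not affect anything.)
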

\begin{proof}
	The hardness is a straightforward corollary of \thref{thm:sigma02_map_salem}: fix $q$ s.t.\ $p<q<1$ and the $\boldfaceSigma^0_2$-complete subset $Q_2$ of $\Cantor$. We can consider the map $f_q\colon\Cantor \to \closedSalem([0,1])$ as in \thref{thm:sigma02_map_salem} and notice that 
	\[ \dim(f_q(x))>p \iff x\in Q_2. \]
	The completeness follows from \thref{thm:capdim_complexity} and \thref{thm:fourierdim_complexity}.
\end{proof}

Recall that $P_3$ is the $\boldfacePi^0_3$-complete subset of $2^{\mathbb{N}\times\mathbb{N}}$ defined as 
\[ P_3 := \{ x \in 2^{\mathbb{N}\times\mathbb{N}} \st (\forall m)(\forall^\infty n)( x(m,n) = 0 ) \}. \]

\begin{theorem}
	\thlabel{thm:dim_>=p_pi03_complete}
	For every $p\in (0,1]$ there exists a continuous function $F\colon 2^{\mathbb{N}\times\mathbb{N}}\to \hypCompact([0,1])$ s.t.\ for every $x\in 2^{\mathbb{N}\times\mathbb{N}}$, $F(x)$ is a Salem set and $\dim(F(x))\ge p$ iff $x\in P_3$. Letting 
	\begin{align*}
		& X_1:= \{ A \in\hypCompact([0,1]) \st \hdim(A)\ge p\},\\
		& X_2:= \{ A \in\hypCompact([0,1]) \st \fourierdim(A)\ge p\}
	\end{align*}
	we have that every set $X$ s.t.\ $X_2\subset X \subset X_1$ is $\boldfacePi^0_3$-hard. In particular, $X_1$ and $X_2$ are $\boldfacePi^0_3$-complete.
\end{theorem}
\begin{proof}
	The last statement follows from the first one using \thref{thm:capdim_complexity} and \thref{thm:fourierdim_complexity}.
	Consider the (continuous) map $\Phi\colon 2^{\mathbb{N}\times\mathbb{N}}\to 2^{\mathbb{N}\times\mathbb{N}}$ defined as
	\[ \Phi(x)(m,n):= \max_{i\le m} \, x(i,n).\]
	Notice that $P_3=\Phi^{-1}(P_3)$ and that 
	\[ x\notin P_3 \iff (\exists k)(\forall m\ge k)(\exists^\infty n) (\Phi(x)(m,n)=1). \]
	Intuitively, we are modifying the $\mathbb{N}\times\mathbb{N}$ matrix $x$ so that if there is a row of $x$ that contains infinitely many $1$s, then, from that row on, every row will contain infinitely many $1$s.

	We build a continuous map $f\colon 2^{\mathbb{N}\times\mathbb{N}} \to\hypCompact([0,1])$ s.t.\ $F:=f\circ \Phi$ is the desired function.
	
	For every $n$, let $T_n:=[2^{-n-1},2^{-n}]$, $q_{n}:=p(1-2^{-n-1})$ and
	consider the function $f_{q_n}\colon \Cantor \to \closedSalem([0,1])$ of \thref{thm:sigma02_map_salem}. 
	Fix also a similarity transformation $\tau_n \colon [0,1] \to T_n$ and define $g_n\colon \Cantor \to \closedSalem(T_n)$ as $g_n:= \tau_n f_{q_n}$, so that, by \thref{thm:fourier_invariant_affine},
	\[ \dim(g_n(y))= \begin{cases}
		q_n & \text{if } y \in Q_2,\\
		0 	& \text{if } y \notin Q_2.
	\end{cases} \]
	Let $x_m$ be the $m$-th row of $x\in 2^{\mathbb{N}\times\mathbb{N}}$. We define 
	\[ f(x):= \{0\} \cup \bigcup_{m\in\mathbb{N}} g_m(x_m). \]
	Intuitively, we are dividing the interval $[0,1]$ into countably many intervals and, on each interval, we are applying the construction we described in the proof of \thref{thm:sigma02_map_salem} (proportionally scaled down). The continuity of $f$ follows from the continuity of each $g_m$. The accumulation point $0$ is added to ensure that $f(x)$ is a closed set.
	
	Recall that Hausdorff dimension is stable under countable unions, so 
	\[ \hdim(f(x))= \sup_{m\in\mathbb{N}} \hdim( g_m(x_m) ).\]
	
	Moreover, since the sets $\family{T_m}{m\in\mathbb{N}}$ are closed,	we can apply \thref{thm:fourier_sup_countable} and conclude that 
	\[ \fourierdim(f(x))= \sup_{m\in\mathbb{N}} \fourierdim( g_m (x_m) ). \]
	Since each $g_m(x_m)$ is Salem we have that $f(x)$ is Salem (and, in turn, $F(x)$ is Salem) and 
	\[ \dim(f(x))= \sup_{m\in\mathbb{N}} \hdim( g_m(x_m) ) = \sup_{m\in\mathbb{N}} \fourierdim( g_m(x_m) ).\]

	If $x\in P_3$ then $\Phi(x)\in P_3$ and, for every $m$, $\Phi(x)_m\in Q_2$. This implies that $g_m(\Phi(x)_m)$ is a Salem set of dimension $q_m$ and therefore 
	\[ \dim(F(x))= \sup_{m\in\mathbb{N}} q_m  = p. \]
	On the other hand, if $x\notin P_3$ then there is a $k>0$ s.t.\ for every $m\ge k$, $\Phi(x)_m\notin Q_2$ and hence $\dim(g_m(\Phi(x)_m))=0$. This implies that 
	\[ \dim(F(x))  \le q_k < p, \]
	and this completes the proof.
\end{proof}

\begin{theorem}
	\thlabel{thm:salem_pi03_complete}
	The set $\{ A\in\hypCompact([0,1]) \st A\in \Salem([0,1])\}$ is $\boldfacePi^0_3$-complete.
\end{theorem}
\begin{proof}
	Let $K\in \hypCompact([0,1])$ be s.t.\ $\hdim(K)=p$ and $\fourierdim(K)=0$. Let also $F$ be the map provided by \thref{thm:dim_>=p_pi03_complete} and define the map $h\colon 2^{\mathbb{N}\times\mathbb{N}}\to\hypCompact([0,1])$ as 
	\[ h(x):= F(x)\cup K.  \]
	Then $h$ is continuous (see e.g.\ \cite[Ex.\ 4.29(iv)]{KechrisCDST}) and
	\begin{gather*}
		\hdim(h(x)) = \max \{ \dim(F(x)), p \},\\
		\fourierdim(h(x)) = \dim(F(x)).
	\end{gather*}
	In particular, $h(x)$ is Salem iff $\dim(F(x))\ge p$ iff $x\in P_3$. The claim follows by \thref{thm:salem_pi03}.
\end{proof}
	
This shows that the upper bounds we obtained in \thref{thm:capdim_complexity}, \thref{thm:fourierdim_complexity} and \thref{thm:salem_pi03} are sharp. In particular, since $\hypCompact([0,1])$ is a Polish space, this implies that the hyperspace of closed Salem subsets of $[0,1]$ is not a Polish space (in the relative topology). This follows from \cite[Thm.\ 3.11]{KechrisCDST}, as a subset of a Polish space is Polish iff it is $\boldsymbol{G}_\delta$.

Notice that, if we endow $\closedSalem([0,1])$ with the upper topology induced by $\hypCompactUF([0,1])$ then, by \thref{thm:capdim_complexity} (or, equivalently, by \thref{thm:fourierdim_complexity}), we have that 
\begin{gather*}
	\{ (A,p) \in\closedSalem([0,1])\times [0,1] \st \dim(A)> p\}\text{ is }\boldfaceSigma^0_2,\\
	\{ (A,p) \in\closedSalem([0,1])\times [0,1] \st \dim(A)\ge p\}\text{ is } \boldfacePi^0_3.
\end{gather*}
Moreover, the proofs of \thref{thm:dim>p_sigma02_complete} and \thref{thm:dim_>=p_pi03_complete} show that, for every $p<1$ and every $q>0$,
\begin{gather*}
	Q_2 \wadgereducible \{ A \in \closedSalem([0,1]) \st \dim(A) > p \},\\
	P_3 \wadgereducible \{ A \in \closedSalem([0,1]) \st \dim(A) \ge q \}.
\end{gather*} 
However we cannot say that they are complete for their respective classes, because the definition of completeness requires the ambient space to be Polish, and $\closedSalem([0,1])$ is not.

Recall that the Fourier dimension of $A$ is based on an estimate on the decay of the Fourier transform of a probability measure supported on $A$. In particular $\fourierdim(A)=\sup \{ \fourierdim(\mu)\st \mu \in \ProbabilityMeas(A) \}$. This is equivalent to let $\mu$ range over finite (non-trivial) Radon measures on $A$, as the estimate on the decay of the Fourier transform is only up to a multiplicative constant. One may wonder whether it is possible to strengthen this condition by defining the Fourier dimension of $A$ as 
\[ \sup\{ s\in [0,1]\st (\exists \mu\in\ProbabilityMeas(A))(\forall x \in \mathbb{R})(|\fouriertransform{\mu}(x)|\le |x|^{-s/2} )\}. \]
The $\boldfacePi^0_3$-completeness of $\closedSalem([0,1])$ implies that the notion of dimension we would obtain is different. Indeed, the space $\ProbabilityMeas([0,1])$ is a compact space, while the space $[0,\infty)\times \ProbabilityMeas([0,1])$ is not. In particular, removing the constant $c$ in the condition on the decay of the Fourier transform would imply that $\closedSalem([0,1])$ is $\boldfacePi^0_2$ (as the projection of a closed set along a compact space is closed, see the proofs of \thref{thm:capdim_complexity} and \thref{thm:fourierdim_complexity}), and therefore not $\boldfacePi^0_3$-complete.

\section{The complexity of closed Salem subsets of \texorpdfstring{$[0,1]^d$}{[0,1]d}}
\label{sec:salem_[0,1]d}
Let us now turn our attention to the family of closed Salem subsets of $[0,1]^d$. 

\begin{proposition}
	\thlabel{thm:n_dim_complexity}
	For every $d\ge 1$:
	\begin{enumerate}
		\item $\{ (A,p) \in\hypCompactUF([0,1]^d)\times [0,d] \st \hdim(A)> p\}$ is $\boldfaceSigma^0_2$;
		\item $\{ (A,p) \in\hypCompactUF([0,1]^d)\times [0,d] \st \hdim(A)\ge p\}$ is $\boldfacePi^0_3$;
		\item $\{ (A,p) \in\hypCompactUF([0,1]^d)\times [0,d] \st \fourierdim(A)> p\}$ is $\boldfaceSigma^0_2$;
		\item $\{ (A,p) \in\hypCompactUF([0,1]^d)\times [0,d] \st \fourierdim(A)\ge p\}$ is $\boldfacePi^0_3$;
		\item $\{ A \in\hypCompactUF([0,1]^d)\st A \in \Salem([0,1]^d)\}$ is $\boldfacePi^0_3$.
	\end{enumerate}
\end{proposition}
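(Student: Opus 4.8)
The plan is to observe that Section~\ref{sec:salem_[0,1]}'s proofs of \thref{thm:capdim_complexity}, \thref{thm:fourierdim_complexity} and \thref{thm:salem_pi03} never used anything special about the interval $[0,1]$, and to re-run them with $[0,1]$ replaced by $[0,1]^d$, $\mathbb{R}$ by $\mathbb{R}^d$, $|x|$ by the Euclidean norm on $\mathbb{R}^d$, and products $xt$ by scalar products $\scalarprod{\xi}{t}$. For items~(1) and~(2) I would copy the argument of \thref{thm:capdim_complexity}: by Frostman's lemma $\hdim(A)=\capdim(A)$ for Borel $A\subset\mathbb{R}^d$ (this is precisely how $\capdim$ was introduced in Section~\ref{sec:backgroud}, with the supremum over $s\in[0,d]$); the map $(x,r)\mapsto [0,1]^d\setminus\ball{x}{r}$ from $\mathbb{R}^{d+1}$ to $\hypCompact([0,1]^d)$ is continuous; the condition $\mu(\ball{x}{r})\le cr^s$ is closed by \cite[Ex.\ 17.29]{KechrisCDST}; the space $\ProbabilityMeas([0,1]^d)$ is compact metrizable by \cite[Thm.\ 17.22]{KechrisCDST}; and \thref{thm:AM_Sigma02} applies unchanged. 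Hence $D(A):=\{s\in[0,d]\st(\exists\mu\in\ProbabilityMeas(A))(\exists c>0)(\forall x)(\forall r>0)(\mu(\ball{x}{r})\le cr^s)\}$ is $\boldfaceSigma^0_2$, so $\hdim(A)>p$ and $\hdim(A)\ge p$ are $\boldfaceSigma^0_2$ and $\boldfacePi^0_3$ respectively.

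For items~(3) and~(4) I would likewise retrace \thref{thm:fourierdim_complexity}. The only place that needs a small adaptation is the continuity of $F\colon\ProbabilityMeas([0,1]^d)\times\mathbb{R}^d\to\mathbb{R}$, $F(\mu,\xi)=|\fouriertransform{\mu}(\xi)|$. As before, $\nu\mapsto\fouriertransform{\nu}(\xi)$ is continuous because its real and imaginary parts are the integrals of the continuous test functions $t\mapsto\cos(\scalarprod{\xi}{t})$ and $t\mapsto\sin(\scalarprod{\xi}{t})$ on the compact set $[0,1]^d$, so the sets $A_{\mu,\varepsilon,\xi}=\{\nu\st|\fouriertransform{\mu}(\xi)-\fouriertransform{\nu}(\xi)|<\varepsilon\}$ are open; and for joint continuity in $\xi$ one uses, for $t\in[0,1]^d$,
\[ |e^{-i\scalarprod{\xi}{t}}-e^{-i\scalarprod{\eta}{t}}| = 2\left|\sin\left(\tfrac{\scalarprod{\xi-\eta}{t}}{2}\right)\right| \le |\scalarprod{\xi-\eta}{t}| \le \sqrt{d}\,|\xi-\eta|, \]
so that $|\fouriertransform{\nu}(\xi)-\fouriertransform{\nu}(\eta)|\le\sqrt{d}\,|\xi-\eta|$ uniformly in $\nu$. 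With $F$ continuous the condition $|\fouriertransform{\mu}(x)|>c|x|^{-s/2}$ is $\boldfaceSigma^0_1$ as in the original proof, $\mu\in\ProbabilityMeas(A)\wedge|\fouriertransform{\mu}(x)|\le c|x|^{-s/2}$ is closed, the corresponding set $Q$ is $\boldfaceSigma^0_2$, and \thref{thm:AM_Sigma02} yields that $D(A)=\operatorname{proj}_{[0,d]}Q$ is $\boldfaceSigma^0_2$; the two quantifier computations then give~(3) and~(4).

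Item~(5) follows from~(1)--(4) exactly as \thref{thm:salem_pi03} follows from \thref{thm:capdim_complexity} and \thref{thm:fourierdim_complexity}: since $\fourierdim(A)\le\hdim(A)$ for Borel $A\subset\mathbb{R}^d$, membership in $\Salem([0,1]^d)$ can be written as $(\forall r\in\mathbb{Q})(\hdim(A)>r\rightarrow\fourierdim(A)>r)$, a countable conjunction of implications between $\boldfaceSigma^0_2$ conditions, hence $\boldfacePi^0_3$. Finally, items~(6) and~(7) hold because $\dim(A)=\hdim(A)$ on every Salem set, so they are just the sets of~(1) and~(2) intersected with the subspace $\Salem([0,1]^d)\times[0,d]$. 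I do not expect a genuine obstacle here: the mathematical content is entirely contained in Section~\ref{sec:salem_[0,1]}, and the only mildly new verification is the higher-dimensional continuity of $(\mu,\xi)\mapsto|\fouriertransform{\mu}(\xi)|$ sketched above; the rest is bookkeeping to track that the ambient space is now $\hypCompact([0,1]^d)$ and that the parameter ranges over $[0,d]$.
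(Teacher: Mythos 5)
Your proposal is correct and follows exactly the paper's own route: the paper proves this proposition precisely by declaring items (1)--(4) straightforward adaptations of \thref{thm:capdim_complexity} and \thref{thm:fourierdim_complexity} (noting that Frostman's lemma and the compactness of $\ProbabilityMeas([0,1]^d)$ carry over) and items (5)--(7) by repeating the proof of \thref{thm:salem_pi03}. Your explicit verification of the joint continuity of $(\mu,\xi)\mapsto|\fouriertransform{\mu}(\xi)|$ via the bound $|e^{-i\scalarprod{\xi}{t}}-e^{-i\scalarprod{\eta}{t}}|\le\sqrt{d}\,|\xi-\eta|$ is a correct filling-in of a detail the paper leaves implicit.
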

\begin{proof}
	For the first two points, the proof is a straightforward adaptation of the proof of \thref{thm:capdim_complexity}. Indeed, recall that Frostman's lemma holds for Borel subsets of $\mathbb{R}^d$, hence we can characterize the Hausdorff dimension by means of the capacitary dimension. Moreover, since $[0,1]^d$ is compact, the condition $\mu(B(x,r))\le c r^s$ is closed and the space $\ProbabilityMeas([0,1]^d)$ is compact. Therefore $\capdim(A)$ is the supremum of a $\boldfaceSigma^0_2$ set, from which the claim follows.

	Similarly, points $3$ and $4$ follow by adapting the proof of \thref{thm:fourierdim_complexity}. Indeed the map $F:=(\mu,x)\mapsto |\fouriertransform{\mu}(x)|$ is continuous and the condition $|\fouriertransform{\mu}(x)|>c |x|^{-t/2}$ is open, therefore the Fourier dimension is the supremum of a $\boldfaceSigma^0_2$ set.

	Finally, the last point can be proved by following the proof of \thref{thm:salem_pi03} and using points $1$ and $3$.
\end{proof}

The fact that the lower bounds for the complexity of the above sets are tight does not come as a corollary of the results in the $1$-dimensional case. Indeed, it is well known that the Fourier dimension is sensitive to the ambient space: any $m$-dimensional hyperplane has null Fourier dimension when seen as a subset of $\mathbb{R}^d$, with $d>m$ (in particular, the unit interval $[0,1]$ has full Fourier dimension if seen as a subset of itself or of $\mathbb{R}$, but it has null Fourier dimension if seen as a subset of $\mathbb{R}^2$).

A natural approach to lifting the lower bounds from the $1$-dimensional case to higher dimensions is to exploit the theorem of Gatesoupe \cite{Gatesoupe67} mentioned in the introduction: if $A\subset [0,1]$ has at least two points and is Salem with dimension $\alpha$ then the radial set $r(A) := \{ x\in [-1,1]^d \st |x|\in A\}$ is a Salem set with dimension $d-1+\alpha$. Thus, given $p>d-1$, consider the function $h_p \colon \Cantor \to \hypCompact([-1,1]^d)$ defined as $h_p(x):= r(f_{p-d+1}(x)\cup \{1\})$, where $f_{p-d+1}$ is the function from \thref{thm:sigma02_map_salem}. It is immediate that $h_p$ is continuous and, by the above observation, we have that $h_p(x)$ is Salem for every $x$ and 
\[ \dim(h_p(x)) = \begin{cases}
	p & \text{if } x\in Q_2\\
	d-1 & \text{if } x\notin Q_2.
\end{cases}\]
Using $h_p$ we can mimic the proofs of Section~\ref{sec:salem_[0,1]} to obtain the analogue of \thref{thm:dim>p_sigma02_complete} for $p\ge d-1$, the analogue of \thref{thm:dim_>=p_pi03_complete} when $p>d-1$, and the analogue of \thref{thm:salem_pi03_complete}. 

Of course, the downside of this approach is that it does not deal with the case $p< d-1$. To fully generalize the results of Section~\ref{sec:salem_[0,1]}, we prove a $d$-dimensional analogue of \thref{thm:sigma02_map_salem}. In recent work, Fraser and Hambrook \cite{FraserHambrook2019} presented a construction of a Salem subset of $[0,1]^d$ of dimension $p$, for every $p\in [0,d]$.  

\begin{definition}[{\cite{FraserHambrook2019}}]
	\thlabel{def:ndim_jarnik}
	Let $K$ be a number field of degree $d$, i.e.\ $K$ is a field extension of $\mathbb{Q}$ and $\dim_\mathbb{Q} K = d$. Let $B=\{\omega_0,\hdots,\omega_{d-1} \}$ be an integral basis for $K$. We can identify $\mathbb{Q}^d$ with $K$ by mapping a vector $q=(q_0,\hdots, q_{n-1})$ to $\sum_{i<n} q_i \omega_i \in K$. Moreover, since $B$ is an integral basis, we can also identify $\mathbb{Z}^d$ with the ring of integers $\intring{K}$ for $K$. For every $\alpha\ge 0$, we define the set $E(K,B,\alpha)$ as 
	\[ \left\{ x\in [0,1]^d \st (\exists^\infty (q,r)\in \mathbb{Z}^d\times \mathbb{Z}^d) \left( \left\|x- \frac{r}{q}\right\|_\infty \le \| q \|_\infty^{-2-\alpha} \right)\right\},  \]
	where $\|\cdot \|_\infty$ is the max-norm on $\mathbb{R}^d$.
\end{definition}

The set $E(K,B,\alpha)$ is a higher dimensional analogue of the fractal $E(\alpha)$. 

\begin{theorem}[{\cite[Thm.\ 4.1]{FraserHambrook2019}}]
	\thlabel{thm:FH_fractal_is_salem}
	For every $\alpha\ge 0$, the set $E(K,B,\alpha)$ is a Salem set of dimension $2d/(2+\alpha)$.
\end{theorem}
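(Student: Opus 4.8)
The statement packages together a Hausdorff–dimension upper bound and a matching Fourier–dimension lower bound. Since $\fourierdim(A)\le\hdim(A)$ for every Borel $A\subseteq\mathbb{R}^d$, the plan is to prove separately
\begin{itemize}
	\item[(a)] $\hdim(E(K,B,\alpha))\le 2d/(2+\alpha)$, and
	\item[(b)] there is a probability measure $\mu$ with $\mathrm{supp}\,\mu\subseteq E(K,B,\alpha)$ such that for every $s<2d/(2+\alpha)$ one has $|\fouriertransform{\mu}(\xi)|\le c_s|\xi|^{-s/2}$.
\end{itemize}
Granting (a) and (b) we get $\fourierdim(E(K,B,\alpha))\ge 2d/(2+\alpha)\ge\hdim(E(K,B,\alpha))$, so $\fourierdim$ and $\hdim$ both equal $2d/(2+\alpha)$ and the set is Salem. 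Write $\tau:=2+\alpha$ throughout.

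Part (a) is the Jarník--Besicovitch covering estimate. For each $M$ the balls $\ball{A_q^{-1}\mathbf r}{|\mathbf q|^{-\tau}}$ with $q\in\intring{K}\setminus\{0\}$, $r\in R_q$ and $|\mathbf q|\ge M$ cover $E(K,B,\alpha)$ by definition. Via the basis $B$ the ring $\intring{K}$ is a copy of $\mathbb{Z}^d$, so there are $O(Q^d)$ elements $q$ with $|\mathbf q|\in[Q,2Q]$; moreover $N(q)=\det A_q$ is a homogeneous degree-$d$ form in the coordinates of $\mathbf q$, whence $|R_q|=|N(q)|=O(|\mathbf q|^d)$. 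Thus the number of pairs $(q,r)$ with $|\mathbf q|\in[Q,2Q]$ is $O(Q^{2d})$, and summing $s$-th powers of diameters over dyadic ranges of $|\mathbf q|$ gives a bound $\lesssim\sum_{2^j\ge M}2^{j(2d-\tau s)}$, which tends to $0$ as $M\to\infty$ whenever $s>2d/\tau$. Hence $\hmeas^s(E(K,B,\alpha))=0$ for all such $s$, i.e.\ $\hdim(E(K,B,\alpha))\le 2d/\tau$.

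For part (b) I would adapt Kaufman's construction of a Salem measure for the classical set $E(\alpha)$. Fix a nonnegative smooth $\varphi$ on $\mathbb{R}^d$ with $\int\varphi=1$, support in $\ball{0}{1}$ and $\fouriertransform{\varphi}$ rapidly decreasing, and a rapidly increasing sequence $Q_1<Q_2<\cdots$ with $Q_{k+1}\ge Q_k^{2\tau}$. At level $k$ put $\delta_k:=(2Q_k)^{-\tau}$ and let $g_k$ be the $\mathbb{Z}^d$-periodisation of $c_k\sum_{|\mathbf q|\in[Q_k,2Q_k]}\sum_{r\in R_q}\delta_k^{-d}\varphi\bigl(\delta_k^{-1}(x-A_q^{-1}\mathbf r)\bigr)$, where the outer sum is restricted to a positive-density family of ``well-conditioned'' $q$ (see below) and $c_k\asymp Q_k^{-2d}$ normalises $\int_{[0,1]^d}g_k=1$. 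A standard argument, using $Q_{k+1}\gg Q_k^\tau$ so that the level-$(k{+}1)$ bump centres equidistribute at the scale $\delta_k$ of the level-$k$ bumps, shows that $\prod_{j\le N}g_j\,dx$ converges weakly-$*$ to a finite nonzero measure; normalising yields a probability measure $\mu$ with $\mathrm{supp}\,\mu\subseteq\bigcap_M\bigcup_{k\ge M}\{x:\exists(q,r),\ |\mathbf q|\sim Q_k,\ |x-A_q^{-1}\mathbf r|\le|\mathbf q|^{-\tau}\}\subseteq E(K,B,\alpha)$. The scaling $\delta_k\asymp Q_k^{-\tau}$ together with the $\asymp Q_k^{2d}$ bumps available at each level makes the local dimension of $\mu$ equal to $2d/\tau$, which also re-proves $\hdim(E(K,B,\alpha))\ge 2d/\tau$ as a by-product.

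Everything then reduces to the decay of $\fouriertransform{\mu}$. For integer frequencies $\xi$ one has $\fouriertransform{g_k}(\xi)=c_k\,\fouriertransform{\varphi}(\delta_k\xi)\sum_{|\mathbf q|\sim Q_k}\bigl(\sum_{r\in R_q}e^{-2\pi i\scalarprod{\xi}{A_q^{-1}\mathbf r}}\bigr)$, and since $r\mapsto A_q^{-1}\mathbf r\bmod\mathbb{Z}^d$ is a bijection from $\intring{K}/q\intring{K}$ onto $(A_q^{-1}\mathbb{Z}^d)/\mathbb{Z}^d$, orthogonality of characters gives $\sum_{r\in R_q}e^{-2\pi i\scalarprod{\xi}{A_q^{-1}\mathbf r}}=|N(q)|$ if $\xi$ lies in the dual lattice $A_q^{T}\mathbb{Z}^d$ and $0$ otherwise. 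One then needs two facts about the chosen family of $q$'s: \emph{(i)} (geometry of numbers) $A_q$ is well-conditioned, so $A_q^{T}\mathbb{Z}^d$ has covolume $|N(q)|\asymp|\mathbf q|^d$ and no nonzero vector of length $\ll|\mathbf q|$ — whence $\fouriertransform{g_j}$ is supported on $\{0\}\cup\{|\xi|\gtrsim Q_j\}$; and \emph{(ii)} (the arithmetic heart) for every fixed $\xi\neq 0$, $\#\{q\text{ in the family}:|\mathbf q|\sim Q,\ \xi\in A_q^{T}\mathbb{Z}^d\}=O_\varepsilon(Q^\varepsilon)$ — the number-field analogue of Kaufman's exponential-sum estimate (morally such $q$ ``divide'' a fixed element, so they are as few as divisors). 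Granting \emph{(i)}--\emph{(ii)} one gets $|\fouriertransform{g_k}(\xi)|\lesssim_\varepsilon Q_k^{-d+\varepsilon}(1+\delta_k|\xi|)^{-A}$ for every $A$ and every $\xi\neq 0$; then, writing $\fouriertransform{\mu}$ as the (absolutely convergent) convolution of the $\fouriertransform{g_j}$ and expanding $\xi=\sum_j\xi_j$ with $\xi_j\in\mathrm{supp}\,\fouriertransform{g_j}$, fact \emph{(i)} forces $\xi_j=0$ whenever $Q_j\gg|\xi|$ while the decay of $\fouriertransform{\varphi}$ forces $|\xi_j|\lesssim\delta_j^{-1}=Q_j^\tau$, so the whole sum localises to the single index $k=k(\xi)$ with $Q_k\lesssim|\xi|\lesssim Q_k^\tau$. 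This gives $|\fouriertransform{\mu}(\xi)|\lesssim_\varepsilon Q_{k(\xi)}^{-d+\varepsilon}\lesssim|\xi|^{-d/\tau+\varepsilon'}$ for every $\varepsilon'>0$ (a routine Poisson-summation/smoothing step transfers the bound from integer frequencies to all $\xi\in\mathbb{R}^d$), i.e.\ $|\fouriertransform{\mu}(\xi)|\le c_s|\xi|^{-s/2}$ for every $s<2d/\tau$. Combined with (a) this yields $\fourierdim(E(K,B,\alpha))=\hdim(E(K,B,\alpha))=2d/(2+\alpha)$, so $E(K,B,\alpha)$ is Salem. The main obstacle is plainly fact \emph{(ii)} — a bound on character sums over residue systems in $\intring{K}$ summed over a range of moduli — which, together with arranging the well-conditioned family in \emph{(i)} to have positive density, is the technical core of \cite{FraserHambrook2019}; the convolution bookkeeping, the weak-$*$ convergence of $\prod_{j\le N}g_j$, and the local-dimension computation are routine but must be done carefully.
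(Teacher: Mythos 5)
The paper does not prove this statement: it is imported verbatim from Fraser and Hambrook (the theorem header carries the citation and no proof environment follows), so there is no internal argument to compare yours against. Your outline does follow the architecture of the proof in that reference (and of its one-dimensional ancestors due to Kaufman and Bluhm): a Jarn\'ik--Besicovitch covering count for the upper bound $\hdim(E(K,B,\alpha))\le 2d/(2+\alpha)$, and a Kaufman-type measure built from periodised bumps at the approximation points $A_q^{-1}\mathbf r$ whose Fourier decay gives the matching lower bound on $\fourierdim$. The covering half of your argument is essentially complete and correct: $O(Q^d)$ moduli in a dyadic range, $|R_q|=|N(q)|=O(|\mathbf q|^d)$ residues each, balls of radius $|\mathbf q|^{-(2+\alpha)}$, and the geometric series closes for every $s>2d/(2+\alpha)$.

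The lower bound, however, is a plan rather than a proof, and the gaps sit exactly where you flag them. Your fact \emph{(ii)} --- that for fixed $\xi\neq 0$ only $O_\varepsilon(Q^\varepsilon)$ moduli $q$ with $|\mathbf q|\sim Q$ satisfy $\xi\in A_q^{T}\mathbb{Z}^d$ --- is the entire arithmetic content of the theorem; without it the character-sum identity yields nothing, and it is not a formality: one must translate the dual-lattice condition into a divisibility statement in $\intring{K}$ and invoke a divisor bound for ideals, after first restricting to a positive-density family of well-conditioned $q$, which is your fact \emph{(i)} and itself requires a geometry-of-numbers argument. Likewise the weak-$*$ convergence of $\prod_{j\le N}g_j\,dx$ to a nonzero measure and the localisation of the convolution $\fouriertransform{g_1}*\cdots*\fouriertransform{g_N}$ to the single scale $k(\xi)$ depend on quantitative separation hypotheses on the $Q_k$ that are stated but not verified. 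So: right approach and honest accounting, but the statement is not proved here; as the paper does, this result should simply be cited from \cite{FraserHambrook2019}.
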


The fact that $E(K,B,0)$ is Salem of dimension $d$ is not explicitly mentioned in \cite{FraserHambrook2019}, but a simple proof was suggested by Hambrook (personal communication): indeed it is enough to notice that, for every $\alpha$ and every $\varepsilon>0$, $E(K,B,\alpha + \varepsilon) \subset E(K,B,\alpha)$, and therefore the claim follows from the monotonicity of the Fourier dimension.

Notice that, in general, the set $E(K,B,\alpha)$ is not closed but $\boldfacePi^0_3$. Analogously to the one-dimensional case, the proof of \thref{thm:FH_fractal_is_salem} shows that there is a closed Salem subset $S(K,B,\alpha)$ of $E(K,B,\alpha)$ with dimension $2d/(2+\alpha)$. To prove the following \thref{thm:sigma02_map_salem_ndim} we cannot proceed as in the one-dimensional case, as we do not know whether $E(K,B,\alpha)$ is dense in $[0,1]^d$.

\begin{lemma}
	\thlabel{thm:sigma02_map_salem_ndim}
	Fix $d>0$. For every $p\in [0,d]$ there exists a continuous map $f_p\colon \Cantor\to \hypCompact([0,1]^d)$ s.t.\ for every $x$, $f_p(x)$ is Salem and 
	\[ \dim(f_p(x)) = \begin{cases}
		p & \text{if } x\in Q_2\\
		0 & \text{if } x\notin Q_2
	\end{cases}\]
\end{lemma}
\begin{proof}
	The idea of the proof is similar to the one of \thref{thm:sigma02_map_salem}: given $x\in\Cantor$, we define a closed set $F_x$ by following the construction of the set $S(K,B,\alpha)$, having care of controlling the Hausdorff dimension whenever $x(k)=1$.
	
	Formally, let $p>0$ (otherwise the claim follows trivially by considering the map $x\mapsto \emptyset$) and let $\alpha$ s.t.\ $2d/(2+\alpha)=p$. 

	Fix $K$ and $B$ as in \thref{def:ndim_jarnik}. For the sake of readability, let $S(\alpha):=S(K,B,\alpha)$. We can write $S(\alpha)$ as intersection of closed nested sets $S^{(k)}(\alpha)$ defined as 
	\[ S^{(k)}(\alpha):= \{ y\in [0,1]^d \st d(y,S(\alpha))\le 2^{-k} \}. \]
	Clearly, $S^{(k)}(\alpha)$ is closed with non-empty interior.

	For each non-degenerate hypercube $C$, define $S(\alpha, C):=\tau(S(\alpha)) $, where $\tau$ is a similarity transformation that maps $[0,1]^d$ onto $C$, and $S^{(k)}(\alpha,C)$ accordingly.

	We define $F^{(k)}_x$ recursively, ensuring that, for each $k$, $F^{(k)}_x$ is closed and has non-empty interior, and $F^{(k+1)}_x\subset F^{(k)}_x$:
	\begin{description}
		\item[Stage $k=0$] $F^{(0)}_x := C_0 := [0,1]^d$, $P_0:=\emptyset$;
		\item[Stage $k+1$] If $x(k+1)=1$, let $P_k:= \{ p^{(k)}_i \}_{i\le N_k}$ be a finite set of points in $F^{(k)}_x$ s.t.\ for each $t\in F^{(k)}_x$ there exists $i\le N_k$ s.t.\ $|t-p^{(k)}_i|\le 2^{-(k+1)}$. Let $C_k$ be the largest (non-degenerate) hypercube contained in $F^{(k)}_x$. 
		Define 
		\[ F^{(k+1)}_x:= S^{(0)}(\alpha,C_k) \cup P_k.\]
		If $x(k+1)=0$ then let $s< k$ be largest s.t.\ $x(s+1)=1$ (or $s=0$ if there is none). We define 
		\[ F^{(k+1)}_x:= S^{(k+1-s)}(\alpha,C_s) \cup P_s. \]
	\end{description}  
	Define $f_p:=x\mapsto F_x = \bigcap_{k\in\mathbb{N}} F^{(k)}_x$. Clearly $F_x$ is closed, as intersection of closed sets. The continuity of the map $f_p$ is guaranteed by the fact that for each $k$, 
	\[ \hmetric(F^{(k)}_x, F^{(k+1)}_x)\le 2^{-(k+1)}.\] 
	This follows from our choice of $P_k$ in the first case, and the fact that $\hmetric(S^{(k)}(\alpha), S^{(k+1)}(\alpha))\le 2^{-(k+1)}$ in the second case.

	Adapting the proof of \thref{thm:sigma02_map_salem}, it is possible to show that $F_x$ is Salem and that $\dim(F_x)=p$ iff $x\in Q_2$. 
\end{proof}

From \thref{thm:sigma02_map_salem_ndim} we can derive the following results, as we did with their analogues in the previous section. 

\begin{proposition}
	\thlabel{thm:dim>p_sigma02_complete_ndim}
	For every $p\in [0,d)$ the sets 
	\begin{align*}
		& \{ A \in\hypCompact([0,1]^d) \st \hdim(A)> p\},\\
		& \{ A \in\hypCompact([0,1]^d) \st \fourierdim(A)> p\}
	\end{align*}
	are $\boldfaceSigma^0_2$-complete.
\end{proposition}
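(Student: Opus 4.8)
The plan is to mirror exactly the structure of the one-dimensional results. The completeness statement splits as usual into an upper bound and a hardness (lower bound) part. For the upper bound, that these two sets are $\boldfaceSigma^0_2$, I would simply invoke \thref{thm:n_dim_complexity}, points (1) and (3), and take the fibre over a fixed $p$ (noting that the sections of a $\boldfaceSigma^0_2$ subset of $\hypCompact([0,1]^d)\times[0,d]$ are again $\boldfaceSigma^0_2$). So the work is entirely in the hardness.

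For hardness, I would fix $q$ with $p<q<d$ and apply \thref{thm:sigma02_map_salem_ndim} to get a continuous map $f_q\colon\Cantor\to\Salem([0,1]^d)$ with $\dim(f_q(x))=q$ if $x\in Q_2$ and $\dim(f_q(x))=0$ otherwise. Composing with the inclusion $\Salem([0,1]^d)\hookrightarrow\hypCompact([0,1]^d)$ (continuous, since the subspace topology on $\Salem$ is induced from the hyperspace), we get a continuous map into $\hypCompact([0,1]^d)$. Then
\[
\hdim(f_q(x))>p \iff x\in Q_2 \iff \fourierdim(f_q(x))>p,
\]
the two equivalences holding because for a Salem set the two dimensions agree, and because $q>p>0=\dim(f_q(x))$ when $x\notin Q_2$ while $q>p$ when $x\in Q_2$. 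Since $Q_2$ is $\boldfaceSigma^0_2$-complete and $\Cantor$ is zero-dimensional Polish, this Wadge reduction witnesses $\boldfaceSigma^0_2$-hardness of both sets, and combined with the upper bound gives completeness.

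I do not expect any genuine obstacle here: the statement is a routine corollary, exactly as \thref{thm:dim>p_sigma02_complete} was derived from \thref{thm:sigma02_map_salem} in the one-dimensional case. The only point deserving a word of care is the role of the ambient space: as the surrounding text emphasises, Fourier dimension is sensitive to the ambient dimension, so the hardness for $d>1$ genuinely needs the Fraser--Hambrook construction packaged in \thref{thm:sigma02_map_salem_ndim} rather than a lift of the one-dimensional map. Since that lemma is already proved, the present proposition follows in a couple of lines, and I would simply write: ``The hardness is an immediate corollary of \thref{thm:sigma02_map_salem_ndim}: fix $q$ with $p<q<d$ and observe that $\dim(f_q(x))>p$ iff $x\in Q_2$; the completeness follows from \thref{thm:n_dim_complexity}.''
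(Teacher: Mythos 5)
Your proposal is correct and coincides with the paper's argument: the paper derives this proposition from \thref{thm:sigma02_map_salem_ndim} exactly as \thref{thm:dim>p_sigma02_complete} was derived from \thref{thm:sigma02_map_salem}, taking the upper bound from \thref{thm:n_dim_complexity} and the hardness from the map $f_q$ with $p<q<d$. Nothing further is needed.
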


\begin{theorem}
	\thlabel{thm:dim_>=p_pi03_complete_ndim}
	For every $p\in (0,d]$ there exists a continuous function $F\colon 2^{\mathbb{N}\times\mathbb{N}}\to \hypCompact([0,1]^d)$ s.t.\ for every $x\in 2^{\mathbb{N}\times\mathbb{N}}$, $F(x)$ is a Salem set and $\dim(F(x))\ge p$ iff $x\in P_3$. Letting 
	\begin{align*}
		& X_1:= \{ A \in\hypCompact([0,1]^d) \st \hdim(A)\ge p\},\\
		& X_2:= \{ A \in\hypCompact([0,1]^d) \st \fourierdim(A)\ge p\}
	\end{align*}
	we have that every set $X$ s.t.\ $X_2\subset X \subset X_1$ is $\boldfacePi^0_3$-hard. In particular, $X_1$ and $X_2$ are $\boldfacePi^0_3$-complete.
\end{theorem}

\begin{theorem}
	\thlabel{thm:salem_pi03_complete_ndim}
	The set $\{ A\in\hypCompact([0,1]^d) \st A\in \Salem([0,1]^d)\}$ is $\boldfacePi^0_3$-complete.
\end{theorem}

\section{The complexity of closed Salem subsets of \texorpdfstring{$\mathbb{R}^d$}{Rd}}
\label{sec:R^d}
Let us now turn our attention to the closed Salem subsets of $\mathbb{R}^d$. 
In this section, we determine the descriptive complexity of the conditions $\hdim(A)> p$, $\hdim(A)\ge p$, $\fourierdim(A)>p$, $\fourierdim(A)\ge p$, $A\in\Salem(\mathbb{R}^d)$, when $A$ is a closed subset of $\mathbb{R}^d$ and $p\in \mathbb{R}$.

Unless otherwise mentioned, $\hypClosed(X)$ will be endowed with the Fell topology. On the other hand, we will write $\hypClosedUF(X)$ (resp.\ $\hypClosedV(X)$) for the hyperspace of closed subsets of $X$ endowed with the upper Fell topology (resp.\ Vietoris topology). 

The hardness results lift easily from the compact cases. 

\begin{proposition}
	\thlabel{thm:ndim_hardness}
	For every $p\in (0,d]$ and every $q\in [0,d)$, we have  
	\begin{itemize}
		\item $\{ A \in\hypClosed(\mathbb{R}^d) \st \hdim(A)> q\}$ is $\boldfaceSigma^0_2$-hard;
		\item $\{ A \in\hypClosed(\mathbb{R}^d) \st \hdim(A)\ge  p\}$ is $\boldfacePi^0_3$-hard;
		\item $\{ A \in\hypClosed(\mathbb{R}^d) \st \fourierdim(A)>  q\}$ is $\boldfaceSigma^0_2$-hard;
		\item $\{ A \in\hypClosed(\mathbb{R}^d) \st \fourierdim(A)\ge  p\}$ is $\boldfacePi^0_3$-hard;
		\item $\{ A \in\hypClosed(\mathbb{R}^d) \st A \in \Salem(\mathbb{R}^d)\}$ is $\boldfacePi^0_3$-hard.
	\end{itemize}
\end{proposition}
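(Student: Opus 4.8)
The plan is to lift the hardness results from the compact setting (\thref{thm:dim>p_sigma02_complete_ndim} and \thref{thm:dim_>=p_pi03_complete_ndim}) by composing the Wadge reductions we already have with a suitable embedding $\iota\colon\hypCompact([0,1]^d)\to\hypClosed(\mathbb{R}^d)$. The key point is that the inclusion map sending a compact $K\subset[0,1]^d$ to the closed set $K\subset\mathbb{R}^d$ is continuous when the target carries the Fell topology: for a compact $L\subset\mathbb{R}^d$ the condition $K\cap L=\emptyset$ is open in the Vietoris topology on $\hypCompact([0,1]^d)$ (it is implied by $K\subset \mathbb{R}^d\setminus L$, which is Vietoris-open since $\mathbb{R}^d\setminus L$ is open), and the condition $K\cap U\neq\emptyset$ for open $U$ is already a Vietoris subbasic open set. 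Hence the preimages of the Fell subbasic opens are Vietoris-open, so $\iota$ is continuous. Moreover $\iota$ clearly preserves Hausdorff and Fourier dimension and the Salem property, since these are intrinsic to the set as a subset of $\mathbb{R}^d$ (and $[0,1]^d$ is not a proper hyperplane, so no dimension drop occurs — indeed the dimensions of $K$ as a subset of $[0,1]^d$ and of $\mathbb{R}^d$ agree).

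Concretely, for the first bullet I would fix $q\in[0,d)$, take the $\boldfaceSigma^0_2$-complete set $Q_2\subset\Cantor$ and the map $f_{q'}\colon\Cantor\to\Salem([0,1]^d)$ from \thref{thm:sigma02_map_salem_ndim} for some $q<q'<d$; then $\iota\circ f_{q'}$ is a continuous map $\Cantor\to\hypClosed(\mathbb{R}^d)$ with $\dim(\iota f_{q'}(x))>q$ iff $x\in Q_2$, witnessing $\boldfaceSigma^0_2$-hardness of the first set (and, taking $\dim=\fourierdim$, of the third). For the second and fourth bullets I would fix $p\in(0,d]$ and compose $\iota$ with the continuous map $f\circ\Phi\colon 2^{\mathbb{N}\times\mathbb{N}}\to\hypCompact([0,1]^d)$ built in the proof of \thref{thm:dim_>=p_pi03_complete_ndim} (the $d$-dimensional analogue), for which $\dim(f\circ\Phi(x))\ge p$ iff $x\in P_3$; then $\iota\circ f\circ\Phi$ reduces the $\boldfacePi^0_3$-complete set $P_3$ to each of these families. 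For the last bullet I would similarly compose $\iota$ with the map $h\circ\Phi$ of that same proof, whose image always has Hausdorff dimension $p$ and is Salem iff $x\in P_3$.

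The only genuine thing to check — and the one mild obstacle — is the continuity of $\iota$ with respect to the Fell topology on the target, together with verifying that the dimension-theoretic quantities are unchanged by viewing a subset of $[0,1]^d$ inside $\mathbb{R}^d$; both are routine once the topologies are unwound as above, so I would state them as a short observation rather than a separate lemma. An alternative, equivalent route is to note that all the relevant reductions in Sections \ref{sec:salem_[0,1]} and \ref{sec:salem_[0,1]d} actually take values in $\hypCompact([0,1]^d)$, and that the identity map $\hypCompact([0,1]^d)\to(\hypClosed(\mathbb{R}^d),\tau_F)$ is a topological embedding onto a closed (indeed compact) subspace, so hardness transfers verbatim; I would phrase the proof around whichever of these is most economical.
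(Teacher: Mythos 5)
Your proposal is correct and matches the paper's proof: the paper likewise deduces the hardness from \thref{thm:dim>p_sigma02_complete_ndim} and \thref{thm:dim_>=p_pi03_complete_ndim} by observing that the inclusion $\hypCompact([0,1]^d)\hookrightarrow\hypClosed(\mathbb{R}^d)$ is continuous (w.r.t.\ the Fell topology) and that the Hausdorff and Fourier dimensions of $A\subset[0,1]^d$ are unchanged when $A$ is viewed inside $\mathbb{R}^d$. Your explicit verification of the continuity via the Fell subbasis is just a spelled-out version of what the paper leaves as an observation.
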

\begin{proof}
	This is a corollary of \thref{thm:dim>p_sigma02_complete_ndim}, \thref{thm:dim_>=p_pi03_complete_ndim} and \thref{thm:salem_pi03_complete_ndim}. Indeed, since the Fourier and Hausdorff dimensions of $A\subset [0,1]^d$ do not change if we see $A$ as a subset of $\mathbb{R}^d$, it is enough to notice that the inclusion map $\hypCompact([0,1]^d)\hookrightarrow\hypClosed(\mathbb{R}^d)$ is continuous. 
\end{proof}

Notice that, since the inclusion $\hypCompact([0,1]^d)\hookrightarrow\hypClosedV(\mathbb{R}^d)$ is continuous as well, the same proof provides a lower bound for the above conditions when the hyperspace $\hypClosed(\mathbb{R}^d)$ is endowed with the Vietoris topology\footnote{However, since $\hypClosedV(\mathbb{R}^d)$ is not Polish, we cannot say that the conditions are hard for their respective classes.}.

As in the previous sections, we obtain the upper bounds for $\hypClosedUF(\mathbb{R}^d)$. 
This will yield, as a corollary, that each of the above set is complete in $\hypClosed(\mathbb{R}^d)$ for its respective class (in case of the upper Fell or Vietoris topology we only obtain a Wadge-equivalence).

Since the proofs of \thref{thm:capdim_complexity}, \thref{thm:fourierdim_complexity} and \thref{thm:n_dim_complexity} exploit the compactness of the ambient space, some extra care is needed when working in a non-compact environment.

\begin{lemma}
	\thlabel{thm:compact_complexity}
	${}$
	\begin{itemize}
		\item $\{ (K,p) \in\hypCompactUF(\mathbb{R}^d)\times [0,d] \st \hdim(K)> p\}$ is $\boldfaceSigma^0_2$;
		\item $\{ (K,p) \in\hypCompactUF(\mathbb{R}^d)\times [0,d] \st \hdim(K)\ge p\}$ is $\boldfacePi^0_3$.
	\end{itemize}
\end{lemma}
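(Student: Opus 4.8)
The plan is to reduce everything to the compact cube $[0,1]^d$ handled in \thref{thm:n_dim_complexity}, exploiting two facts: the Hausdorff dimension of a compact set does not depend on the ambient space (being a bi-Lipschitz invariant), and every compact $K\subset\mathbb{R}^d$ is contained in some cube $[-n,n]^d$.

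First I would fix, for each $n$, the set $F_n:=\{K\in\hypCompact(\mathbb{R}^d)\st K\subseteq[-n,n]^d\}$, which is closed since its complement $\{K\st K\cap(\mathbb{R}^d\setminus[-n,n]^d)\neq\emptyset\}$ is open. Composing a similarity $[0,1]^d\to[-n,n]^d$ (applied to sets) with the inclusion $[-n,n]^d\hookrightarrow\mathbb{R}^d$ induces a map $\iota_n\colon\hypCompact([0,1]^d)\to\hypCompact(\mathbb{R}^d)$ that is a homeomorphism onto $F_n$, and since a similarity is bi-Lipschitz, $\hdim$ is preserved along $\iota_n$. Hence, by \thref{thm:n_dim_complexity}, the set $\{(K,p)\in F_n\times[0,d]\st\hdim(K)>p\}$ is the $(\iota_n\times\mathrm{id})$-image of a $\boldfaceSigma^0_2$ set, so it is $\boldfaceSigma^0_2$ in $F_n\times[0,d]$; as $F_n\times[0,d]$ is closed in $\hypCompact(\mathbb{R}^d)\times[0,d]$, it is $\boldfaceSigma^0_2$ in the whole product. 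The same reasoning gives that $\{(K,p)\in F_n\times[0,d]\st\hdim(K)\ge p\}$ is $\boldfacePi^0_3$ in $\hypCompact(\mathbb{R}^d)\times[0,d]$.

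For the first bullet I would then just write
\[
\{(K,p)\st\hdim(K)>p\}=\bigcup_{n\in\mathbb{N}}\{(K,p)\st K\in F_n\wedge\hdim(K)>p\},
\]
the equality holding because every compact subset of $\mathbb{R}^d$ lies in some $[-n,n]^d$; this is a countable union of $\boldfaceSigma^0_2$ sets, hence $\boldfaceSigma^0_2$. The second bullet needs a little more care, because the analogous union for ``$\ge$'' would only exhibit $\{(K,p)\st\hdim(K)\ge p\}$ as a countable union of $\boldfacePi^0_3$ sets, which need not be $\boldfacePi^0_3$. Instead I would combine the first bullet with the identity
\[
\hdim(K)\ge p\iff(\forall q\in\mathbb{Q})\,(q<p\rightarrow\hdim(K)>q),
\]
so that
\[
\{(K,p)\st\hdim(K)\ge p\}=\bigcap_{q\in\mathbb{Q}}\bigl(\{(K,p)\st p\le q\}\cup\{(K,p)\st\hdim(K)>q\}\bigr).
\]
Each set in the intersection is the union of a closed set and a $\boldfaceSigma^0_2$ set — the latter being the preimage of $\{(K,p)\st\hdim(K)>p\}$ under the continuous map $K\mapsto(K,q)$, hence $\boldfaceSigma^0_2$ by the first bullet — and a countable intersection of $\boldfaceSigma^0_2$ sets is $\boldfacePi^0_3$, which finishes the proof.

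The only non-routine points are the bookkeeping that converts the ``$\ge$'' statement from a (too complex) union into an intersection of $\boldfaceSigma^0_2$ sets, and the observation that $\hdim$ is unchanged when passing between $[-n,n]^d$ and $\mathbb{R}^d$; the verifications that $F_n$ is closed and that $\iota_n$ is a homeomorphism onto $F_n$ are standard hyperspace manipulations. I expect the former to be the main thing to get right.
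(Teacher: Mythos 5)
Your argument is correct, and it takes a genuinely different route from the paper's. You reduce to Proposition~\thref{thm:n_dim_complexity} by exhausting $\hypCompact(\mathbb{R}^d)$ with the closed sets $F_n=\{K\st K\subseteq[-n,n]^d\}$, each homeomorphic to $\hypCompact([0,1]^d)$ via a dimension-preserving similarity; the ``$>$'' set is then a countable union of $\boldfaceSigma^0_2$ sets, and you correctly sidestep the trap for ``$\ge$'' (a countable union of $\boldfacePi^0_3$ sets proves nothing) by rewriting $\hdim(K)\ge p$ as $(\forall q\in\mathbb{Q})(q<p\rightarrow\hdim(K)>q)$, which is exactly the identity the paper also uses at the end of its proof. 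The paper instead reruns the capacitary-dimension computation of \thref{thm:capdim_complexity} directly in $\mathbb{R}^d$: it encodes $\mu\in\ProbabilityMeas(K)$ as $\mu(K)\ge 1\land(\exists n)(\mu(\closure{\ball{\mathbf{0}}{n}})\ge 1)$, turns the Frostman inequality $\mu(\ball{x}{r})\le cr^s$ into a closed condition by testing $\mu$ against the compact set $\closure{\ball{\mathbf{0}}{n+|x|+r}}\setminus\ball{x}{r}$, and then projects a $\boldfaceSigma^0_2$ set along the $\mathbf{K}_\sigma$ space $\bigcup_n\ProbabilityMeas(\closure{\ball{\mathbf{0}}{n}})$ via \thref{thm:AM_Sigma02}. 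Both proofs use $\sigma$-compactness to tame the unboundedness of $\mathbb{R}^d$, but you localize in the hyperspace while the paper localizes in the space of measures; your version is shorter and more modular, since it recycles the $[0,1]^d$ case wholesale, whereas the paper's direct computation yields the uniform $\boldfaceSigma^0_2$ description of $D(K)$ that it mirrors for the Fourier dimension and for the effective versions. Two small points you should make explicit: restrict the intersection to $q\in\mathbb{Q}\cap[0,d)$ (for other $q$ the corresponding term is the whole space), and state that $\hdim$ is unchanged when a subset of $[-n,n]^d$ is viewed as a subset of $\mathbb{R}^d$ --- a fact the paper itself uses tacitly in \thref{thm:ndim_hardness}.
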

\begin{proof}
	Define the set $D(K)$ as 
	\[ \{ s\in [0,d] \st (\exists \mu \in \ProbabilityMeas(K))(\exists c>0)(\forall x\in \mathbb{R}^d)(\forall r>0)(\mu(\ball{x}{r})\le c r^s ) \} \]	
	and recall that $\hdim(K) = \sup D(K)$. For every $n$, let $K_n:=\closure{\ball{\mathbf{0}}{n}}$. Observe that 
	\[ \mu\in\ProbabilityMeas(K) \iff \mu \in \ProbabilityMeas(\mathbb{R}^d) \land \mu(K)\ge 1 \land (\exists n\in\mathbb{N})(\mu(K_n)\ge 1).  \]
	We can therefore rewrite $D(K)$ as follows
	\begin{align*}
		D(K)= \{ s\in [0,d] \st  (\exists \mu \in & \ProbabilityMeas(\mathbb{R}^d))(\exists c>0)(\exists n\in\mathbb{N}) \\
			& ( \mu(K)\ge 1 \land \mu(K_n)\ge 1 \land{}\\
			& (\forall x\in \mathbb{R}^d)(\forall r>0)(\mu(\ball{x}{r})\le c r^s )) \}. 
	\end{align*}
	In particular $\mu(K_n)\ge 1$ implies that $\support{\mu}\subset K_n$, hence 
	\[ \mu(\ball{x}{r})\le c r^s \iff \mu(H)\ge 1-c r^s, \]
	where $H:=\closure{\ball{\mathbf{0}}{n+x+r}}\setdifference \ball{x}{r}$. It is routine to prove that the function $\varphi\colon\mathbb{N}\times\mathbb{R}_+\times\mathbb{R}^d\to \hypCompactUF(\mathbb{R}^d)$ that sends $(n,r,x)$ to the above-defined $H$ is continuous. Notice that if we had set $H=K\setdifference \ball{x}{r}$ then the resulting map would not be continuous. This motivates the use of $K_n$ in the above characterization of $D(K)$.
	
	By \thref{thm:mu(K)>=x_closed} the set 
	\[ \{ (\mu,K,a)\in \ProbabilityMeas(\mathbb{R}^d) \times \hypCompactUF(\mathbb{R}^d)\times \mathbb{R} \st \mu(K)\ge a\} \]
	is closed. In particular the condition $\mu(\ball{x}{r})\le c r^s$ is closed and the set 
	\begin{align*}
		Q:= \{ & (s,\mu)\in [0,d]\times \ProbabilityMeas(\mathbb{R}^d) \st (\exists c>0)(\exists n\in\mathbb{N}) \\
			& ( \mu(K)\ge 1 \land \mu(K_n)\ge 1 \land (\forall x\in \mathbb{R}^d)(\forall r>0)(\mu(\ball{x}{r})\le c r^s )) \}
	\end{align*}
	is $\boldfaceSigma^0_2$. 
	
	Notice that we can equivalently see $Q$ as a subset of $[0,d]\times \bigcup_{n\in\mathbb{N}}\ProbabilityMeas(K_n)$. In particular, $D(K)$ is the projection of a $\boldfaceSigma^0_2$ set along a metrizable and $\mathbf{K}_\sigma$ space (as $\ProbabilityMeas(X)$ is compact if $X$ is). Therefore, using \thref{thm:AM_Sigma02} we can conclude that $D(K)$ is $\boldfaceSigma^0_2$ and that the conditions
	\[ \hdim(K) > p \iff (\exists s\in \mathbb{Q})(s>p \land s\in D(K)),\]
	\[ \hdim(K) \ge p \iff (\forall s\in \mathbb{Q})(s<p \rightarrow s\in D(K))\]
	are $\boldfaceSigma^0_2$ and $\boldfacePi^0_3$ respectively.
\end{proof}

\begin{lemma}
	\thlabel{thm:AsubsetB_closed}
	The set $\{ (A,B)\in \hypClosedUF(\mathbb{R}^d)\times \hypClosed(\mathbb{R}^d) \st B\subset A \}$ is $\boldfacePi^0_1$.
\end{lemma}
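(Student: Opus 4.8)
The plan is to show that the complement,
\[ \{ (A,B)\in \hypClosed(\mathbb{R}^d)\times \hypClosed(\mathbb{R}^d) \st B\not\subset A \}, \]
is open in the product of two copies of $(\hypClosed(\mathbb{R}^d),\tau_F)$. First I would fix a countable base $\family{U_j}{j\in\mathbb{N}}$ for $\mathbb{R}^d$ consisting of open balls whose closures $\closure{U_j}$ are compact, so that each $\closure{U_j}$ can legitimately appear in a subbasic Fell set of the first kind. The key observation is that $B\not\subset A$ holds if and only if there is a point $b\in B\setdifference A$; since $A$ is closed, such a $b$ has an open neighborhood disjoint from $A$, and by the choice of the base we can find $j$ with $b\in U_j$ and $\closure{U_j}\cap A=\emptyset$. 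Conversely, if such a $j$ exists then $B$ meets $U_j$ while $A$ misses $\closure{U_j}\supset U_j$, so $B$ is not contained in $A$.

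This gives the identity
\[ B\not\subset A \iff (\exists j\in\mathbb{N})\bigl( A\cap \closure{U_j}=\emptyset \,\land\, B\cap U_j\neq\emptyset \bigr). \]
For each fixed $j$, the set $\{A \st A\cap \closure{U_j}=\emptyset\}$ is a subbasic Fell-open set (since $\closure{U_j}$ is compact), and $\{B \st B\cap U_j\neq\emptyset\}$ is a subbasic Fell-open set (since $U_j$ is open); hence their product is open in $\hypClosed(\mathbb{R}^d)\times\hypClosed(\mathbb{R}^d)$. Taking the countable union over $j$ keeps the set open, so the complement of $\{(A,B)\st B\subset A\}$ is open and therefore $\{(A,B)\st B\subset A\}$ is $\boldfacePi^0_1$.

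The only real point requiring care — the ``main obstacle,'' though a mild one — is making sure the witnessing neighborhoods are drawn from a base of \emph{relatively compact} open sets, so that the first-kind Fell subbasic sets apply verbatim; this uses local compactness of $\mathbb{R}^d$. One should also double-check the easy direction: if for every $j$ either $A\cap\closure{U_j}\neq\emptyset$ or $B\cap U_j=\emptyset$, then every point of $B$ lies in some $U_j$ meeting $\closure{U_j}\cap A$, and letting the balls shrink around a putative point of $B\setdifference A$ forces that point into $A$ by closedness of $A$ — contradiction. Hence the equivalence holds and the proof is complete.
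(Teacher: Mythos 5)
Your proof is correct and follows essentially the same route as the paper's: both exhibit the complement as a union of products of the two kinds of Fell-subbasic open sets, using a small ball around a witness $b\in B\setdifference A$ whose closure misses $A$ (the paper does this pointwise with the ball $\ball{x}{\varepsilon/2}$ where $\varepsilon=d(x,A)$, while you organize the same witnesses into a countable base, which changes nothing of substance).
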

\begin{proof}
	It suffices to show that the complement of the set is open. If $B\not\subset A$, fix $x\in B\setdifference A$ and let $\varepsilon:=d(x,A)>0$. Let $\mathcal{U}_1:=\{ F \in \hypClosed(\mathbb{R}^d) \st F \cap \closure{\ball{x}{\varepsilon/2}}= \emptyset\}$ and $\mathcal{U}_2:=\{ F \in \hypClosed(\mathbb{R}^d) \st F\cap \ball{x}{\varepsilon/2} \neq\emptyset \}$. Clearly $\mathcal{U}_1\times \mathcal{U}_2$ is open in $\hypClosedUF(\mathbb{R}^d)\times \hypClosed(\mathbb{R}^d)$, $(A,B)\in \mathcal{U}_1\times \mathcal{U}_2$ and every $(A',B')\in \mathcal{U}_1\times \mathcal{U}_2$ is s.t.\ $B'\not\subset A'$.
\end{proof}

\begin{theorem}
	\thlabel{thm:ndim_hausdorff_complexity}
	The sets 
	\begin{align*}
		X_1 := &\{ (A,p) \in\hypClosedUF(\mathbb{R}^d)\times [0,d] \st \hdim(A)> p\}, \\
		X_2 := &\{ (A,p) \in\hypClosedUF(\mathbb{R}^d)\times [0,d] \st \hdim(A)\ge p\} 
	\end{align*}
	are $\boldfaceSigma^0_2$ and $\boldfacePi^0_3$ respectively. In particular, for every $p\in[0,d)$ and $q\in (0,d]$, the sets  	
	\begin{align*}
		& \{ A \in\hypClosed(\mathbb{R}^d) \st \hdim(A)> p\},\\
		& \{ A \in\hypClosed(\mathbb{R}^d) \st \hdim(A)\ge q\}
	\end{align*}
	are $\boldfaceSigma^0_2$-complete and $\boldfacePi^0_3$-complete respectively. 
\end{theorem}
\begin{proof}
	Notice that, as a consequence of the countable stability of the Hausdorff dimension, we have
	\[ \hdim(A)= \sup \{ \hdim(K) \st K\subset A \text{ and }K\text{ is compact} \},\]
	and therefore
	\[ \hdim(A)>p \iff (\exists K \in \hypClosed(\mathbb{R}^d))(K\subset A \land K\in \hypCompact(\mathbb{R}^d) \land \hdim(K)>p).\]
	Notice that the condition $K\subset A$ is $\boldfacePi^0_1$ by \thref{thm:AsubsetB_closed}. We claim that the conjunction $K\in \hypCompact(\mathbb{R}^d) \land \hdim(K)>p$ is $\boldfaceSigma^0_2$. This follows from the fact $K\in \hypCompact(\mathbb{R}^d)$ is equivalent to $(\exists n)(K\subset \closure {\ball{\mathbf{0}}{n}})$, hence it is $\boldfaceSigma^0_2$ using again \thref{thm:AsubsetB_closed}; moreover, since the inclusion map $\hypClosed(X)\restrict{\hypCompact(X)}\hookrightarrow \hypCompactUF(X)$ is continuous, by \thref{thm:compact_complexity} the condition $\hdim(K)>p$ is $\boldfaceSigma^0_2$ when $K$ is compact.

	This shows that the set $X_1$ is the projection of a $\boldfaceSigma^0_2$ set along $\hypClosed(\mathbb{R}^d)$. Since $\hypClosed(\mathbb{R}^d)$ is compact, we can use \thref{thm:AM_Sigma02} and conclude that $X_1$ is $\boldfaceSigma^0_2$.

	Moreover, since $\hdim(A)\ge p$ iff $(\forall r\in \mathbb{Q})(r< p \rightarrow\hdim(A)>r)$, this also shows that $X_2$ is $\boldfacePi^0_3$. The completeness follows from \thref{thm:ndim_hardness}.
\end{proof}

With a similar strategy, we can characterize the upper bounds for the Fourier dimension:

\begin{theorem}
	\thlabel{thm:ndim_fourier_complexity}
	The sets 
	\begin{align*}
		X_1 := &\{ (A,p) \in\hypClosedUF(\mathbb{R}^d)\times [0,d] \st \fourierdim(A)> p\}, \\
		X_2 := &\{ (A,p) \in\hypClosedUF(\mathbb{R}^d)\times [0,d] \st \fourierdim(A)\ge p\} 
	\end{align*}
	are $\boldfaceSigma^0_2$ and $\boldfacePi^0_3$ respectively. In particular, for every $p\in[0,d)$ and $q\in (0,d]$, the sets  	
	\begin{align*}
		& \{ A \in\hypClosed(\mathbb{R}^d) \st \fourierdim(A)> p\},\\
		& \{ A \in\hypClosed(\mathbb{R}^d) \st \fourierdim(A)\ge q\}
	\end{align*}
	are $\boldfaceSigma^0_2$-complete and $\boldfacePi^0_3$-complete respectively. 
\end{theorem}
\begin{proof}
	Notice that the condition 
	\[(\forall x\in\mathbb{R}^d)(|\fouriertransform{\mu}(x)| \le c|x|^{-s/2} )\]
	is closed, as the map $(\mu,x,s,c)\mapsto |\fouriertransform{\mu}(x)|-c|x|^{-s/2}$ is continuous (see also the proof of \thref{thm:fourierdim_complexity}). In particular, this implies that the sets 
	\begin{gather*}
		\{ (K,p) \in\hypCompactUF(\mathbb{R}^d)\times [0,d] \st \fourierdim(K)> p\},\\
		\{ (K,p) \in\hypCompactUF(\mathbb{R}^d)\times [0,d] \st \fourierdim(K)\ge p\}
	\end{gather*}
	are $\boldfaceSigma^0_2$ and $\boldfacePi^0_3$ respectively.
	
	Since the Fourier dimension is inner regular for compact sets, we can write  
	\[ \fourierdim(A)>p \iff (\exists K \in \hypClosed(\mathbb{R}^d))(K\subset A \land K\in \hypCompact(\mathbb{R}^d) \land \fourierdim(K)>p).\]
	As in the proof of \thref{thm:ndim_hausdorff_complexity}, using \thref{thm:AsubsetB_closed} and the fact that the inclusion map $\hypClosed(X)\restrict{\hypCompact(X)}\hookrightarrow \hypCompactUF(X)$ is continuous, we have that $X_1$ is the projection of a $\boldfaceSigma^0_2$ set along $\hypClosed(\mathbb{R}^d)$. \thref{thm:AM_Sigma02} implies that $X_1$ is $\boldfaceSigma^0_2$ and $X_2$ is $\boldfacePi^0_3$. The completeness follows from \thref{thm:ndim_hardness}.
\end{proof}

\begin{theorem}
	The set $\{ A\in\hypClosed(\mathbb{R}^d) \st A\in \Salem(\mathbb{R}^d)\}$ is $\boldfacePi^0_3$-complete.
\end{theorem}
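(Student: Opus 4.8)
The plan is to obtain the upper bound by repeating, almost verbatim, the argument of \thref{thm:salem_pi03}, only now feeding in the $\mathbb{R}^d$-versions of the dimension estimates proved above, and then to read off hardness directly from \thref{thm:ndim_hardness}; since the ambient space is Polish, these two facts together give completeness. So the first step is membership in $\boldfacePi^0_3$. For a closed (indeed, Borel) $A\subset\mathbb{R}^d$ we always have $\fourierdim(A)\le\hdim(A)$, so $A\in\Salem(\mathbb{R}^d)$ holds if and only if $\hdim(A)\le\fourierdim(A)$, and because the sets of admissible exponents are downward closed this can be rewritten as
\[ (\forall r\in\mathbb{Q})\bigl(\hdim(A)>r \rightarrow \fourierdim(A)>r\bigr). \]
By \thref{thm:ndim_hausdorff_complexity} the condition $\hdim(A)>r$ is $\boldfaceSigma^0_2$ in $A$, and by \thref{thm:ndim_fourier_complexity} the condition $\fourierdim(A)>r$ is $\boldfaceSigma^0_2$ in $A$. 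Hence each implication above defines a set of the form (a $\boldfacePi^0_2$ set) $\cup$ (a $\boldfaceSigma^0_2$ set), which lies in $\boldfacePi^0_3$, and the countable intersection over $r\in\mathbb{Q}$ stays in $\boldfacePi^0_3$ since that class is closed under countable intersections. This shows $\{A\in\hypClosed(\mathbb{R}^d)\st A\in\Salem(\mathbb{R}^d)\}\in\boldfacePi^0_3(\hypClosed(\mathbb{R}^d))$.

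For the lower bound I would simply invoke \thref{thm:ndim_hardness}, whose last item already states that $\{A\in\hypClosed(\mathbb{R}^d)\st A\in\Salem(\mathbb{R}^d)\}$ is $\boldfacePi^0_3$-hard — the witnessing reduction being the composition of the continuous reduction of $P_3$ to $\{A\in\hypCompact([0,1]^d)\st A\in\Salem([0,1]^d)\}$ furnished by \thref{thm:dim_>=p_pi03_complete_ndim} with the inclusion $\hypCompact([0,1]^d)\hookrightarrow\hypClosed(\mathbb{R}^d)$, which is continuous for the Fell topology. Finally, since $(\hypClosed(\mathbb{R}^d),\tau_F)$ is a Polish space, a subset that is simultaneously $\boldfacePi^0_3$ and $\boldfacePi^0_3$-hard is $\boldfacePi^0_3$-complete, and this concludes the proof.

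I do not expect a genuine obstacle here: all the substantive work — in particular the weak inner regularity result \thref{thm:fourier_inner_stability} that powers \thref{thm:ndim_fourier_complexity}, and the hardness constructions of Section~\ref{sec:salem_[0,1]d} — has already been carried out, so what remains is to assemble the Boolean combination and to recall that the Fell hyperspace is Polish. The one point worth a sentence of care is that the rewriting of the Salem condition as a countable conjunction of implications is valid precisely because $\fourierdim\le\hdim$ holds unconditionally on Borel sets, so only one of the two inequalities needs checking, exactly as in \thref{thm:salem_pi03}. As in the rest of the paper one may add the remark that the identical argument shows $\Salem(\mathbb{R}^d)$ is Wadge-equivalent to a $\boldfacePi^0_3$-complete subset of $\Cantor$ when $\hypClosed(\mathbb{R}^d)$ carries the Vietoris topology instead, the only difference being that completeness cannot be asserted there, since that space is not Polish.
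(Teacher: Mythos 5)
Your proof is correct and follows essentially the same route as the paper's: the upper bound rewrites the Salem condition as $(\forall r\in\mathbb{Q})(\hdim(A)>r \rightarrow \fourierdim(A)>r)$ using the $\boldfaceSigma^0_2$ bounds of \thref{thm:ndim_hausdorff_complexity} and \thref{thm:ndim_fourier_complexity}, exactly as in \thref{thm:salem_pi03}, and the hardness is read off from \thref{thm:ndim_hardness}. No substantive differences.
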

\begin{proof}
	Using \thref{thm:ndim_hausdorff_complexity} and \thref{thm:ndim_fourier_complexity} we have that, for every $p$, the conditions $\hdim(A)>p$ and $\fourierdim(A)>p$ are $\boldfaceSigma^0_2$. The fact that $\{ A\in\hypClosed(\mathbb{R}^d) \st A\in \Salem(\mathbb{R}^d)\}$ is $\boldfacePi^0_3$ follows as in the proof of \thref{thm:salem_pi03}, while the hardness (and hence the completeness) follows from \thref{thm:ndim_hardness}.
\end{proof}

\section{Final remarks}

Let $X$ be $[0,1]^d$ or $\mathbb{R}^d$, for some $d\ge 1$. Notice that the set $\closedSalem(X)$ is comeager in $\hypClosedV(X)$. Indeed, the set $\{K\in\hypClosedV(X)\st \hdim(K)\le 0\}\subset \closedSalem(X)$ is $\boldfacePi^0_2$ by \thref{thm:capdim_complexity} (and its higher-dimensional analogues), and dense because it contains the set $\{ K \in \hypClosedV(X)\st K$ is finite$\}$, which is dense. 
The same argument also shows that for every $p$ the sets $\{K\in\hypClosedV(X)\st \hdim(K)\le p\}$ and $\{K\in\hypClosedV(X)\st \fourierdim(K)\le p\}$ are comeager.

Recall that if $\boldfaceGamma$ is a level in the Borel hierarchy, we say that $f\colon X\to Y$ is $\boldfaceGamma$-measurable if, for every open $U\subset Y$, $f^{-1}(U)\in \boldfaceGamma(X)$. \thref{thm:dim>p_sigma02_complete_ndim}, \thref{thm:ndim_hausdorff_complexity} and \thref{thm:ndim_fourier_complexity} show that the maps $\hdim\colon \hypClosed(X)\to \mathbb{R}$ and $\fourierdim\colon \hypClosed(X)\to \mathbb{R}$ are $\boldfaceSigma^0_3$-measurable. Using \cite[Thm.\ 24.3]{KechrisCDST}, this is equivalent to both $\hdim$ and $\fourierdim$ being Baire class $2$.

\bibliographystyle{mbibstyle}
\bibliography{bibliography}

\printauthor

\end{document}